	\def\captionfont{\setb@se{11pt}\protect\footnotesize}
    \def\captionfont{\protect\footnotesize}
    \newcommand{\iprd}[2]{\left( #1 , #2 \right)}
    \newcommand{\aiprd}[2]{a\!\left( #1 , #2 \right)}
    \newcommand{\Soh}{\mathring{S}_h}
    \newcommand{\bI}{{\bf I}}
	\def\norm#1#2{\left\| #1 \right\|_{#2}}
	\newcommand{\avephio}{\overline{\phi}_0}
	\newcommand{\cirphih}{\mathring{\phi}_h}
	\newcommand{\avemuh}{\overline{\mu}_h}
	\newcommand{\newtmuh}{\mathring{\mu}_{h,j}}
	\newcommand{\newtphih}{\mathring{\phi}_{h,j}}
	\newcommand{\cirmuh}{\mathring{\mu}_h}
	\newcommand{\dtau}{\delta_\tau}
	\newcommand{\phih}{\phi_h}
	\newcommand{\bv}{{\bf v}}
	\newcommand{\muh}{\mu_h}
	\newcommand{\bB}{{\bf B}}
	\newcommand{\bP}{{\bf P}}
	\newcommand{\bK}{{\bf K}}
	\newcommand{\bM}{{\bf M}}
	\newcommand{\bc}{{\bf c}}
	\def\tK{\tilde{\bf K}}
	\def\bI{{\bf I}}
	\def\bB{{\bf B}}
	\def\bC{{\bf C}}
	\def\bv{{\bf v}}
	\def\bV{{\bf V}}
	\def\d{\displaystyle}
	\newtheorem{thm}{Theorem}[section]
	\newtheorem{rem}[thm]{Remark}
\begin{document}
\title{A Robust Solver for a Second Order Mixed Finite Element Method for the
 Cahn-Hilliard Equation\thanks{The work of the first and third authors
 was supported in part by the National Science
  Foundation under Grant No. DMS-16-20273.}}
	\author{
Susanne C. Brenner\thanks{Department of Mathematics and Center for Computation \& Technology,
 Louisiana State University, Baton Rouge, LA 70803 (brenner@math.lsu.edu)},
	\and
Amanda E. Diegel\thanks{Department of Mathematics  and Center for Computation \& Technology,
 Louisiana State University, Baton Rouge, LA 70803 (diegel@math.lsu.edu, adiegel@math.msstate.edu)},
	\and
Li-Yeng Sung\thanks{Department of Mathematics and Center for Computation \& Technology,
 Louisiana State University, Baton Rouge, LA 70803 (sung@math.lsu.edu)}}

	\maketitle
	
	\numberwithin{equation}{section}
	
\begin{abstract}\noindent
 We develop a robust solver for a second order mixed finite element splitting scheme
 for the Cahn-Hilliard equation.  This work is an extension of our previous work in which we developed a robust solver for a first order mixed finite element splitting scheme for the Cahn-Hilliard equaion. The key ingredient of the solver is a preconditioned
 minimal residual algorithm (with a multigrid preconditioner) whose performance
 is independent of the spacial mesh size and the time step size for a given interfacial width parameter.  
 The dependence on the interfacial width parameter is also mild.
\end{abstract}

	\section{Introduction}\label{sec:Introduction}

The purpose of this paper is to demonstrate that the methods developed in our previous paper \cite{BDS:2018:CHMGSolver} can be extended to a second order (with respect to both time and space) mixed finite element method for the Cahn-Hilliard equation. Let $\Omega\subset \mathbb{R}^d$, $d=2,3$, be an open polygonal or polyhedral domain and consider the following form of the Cahn-Hilliard energy \cite{Cahn:1961}:
\begin{align}
 E(\phi) = \int_{\Omega} \left(\frac{1}{4 \varepsilon} (\phi^2-1)^2 +
 \frac{\varepsilon}{2} | \nabla \phi |^2\right) d{x},
\label{eq:ch-energy}
\end{align}
where  $\varepsilon >0$ is a constant, and $\phi \in H^1(\Omega)$ represents a concentration field. The phase equilibria are represented by $\phi = \pm 1$ and the parameter $\varepsilon$ represents a non-dimensional interfacial width between the two phases.
\par
The Cahn-Hilliard equation, which can be interpreted as the gradient flow of the energy \eqref{eq:ch-energy} in the dual space of $H^1(\Omega)$, is often represented in mixed form by
\begin{subequations}\label{subeqs:CH}
	\begin{eqnarray}
 \partial_t \phi = \varepsilon\Delta \mu  && \text{in} \,\Omega ,
	\label{eq:CH-mixed-a-alt}
	\\
 \mu = \varepsilon^{-1}\, \left(\phi^3 - \phi\right) -
 \varepsilon \Delta \phi && \text{in} \,\Omega,
	\label{eq:CH-mixed-b-alt}
	\end{eqnarray}
\end{subequations}
together with the boundary conditions $\partial_n \phi =0$ and $\partial_n \mu = 0$.
\par
Let $T$ be a positive number and  $H^{-1}_N(\Omega)$ be the dual space of $H^1(\Omega)$. A weak formulation of \eqref{eq:CH-mixed-a-alt}--\eqref{eq:CH-mixed-b-alt} is to find $(\phi,\mu)$ such that
\begin{subequations}\label{subeqs:Spaces}
	\begin{eqnarray}
\phi &\in& L^\infty\left(0,T;H^1(\Omega)\right)\cap
 L^4\left(0,T;L^\infty(\Omega)\right),	\\
\partial_t \phi &\in&  L^2\bigl(0,T; H_N^{-1}(\Omega)\bigr),
	\\
\mu &\in& L^2\bigl(0,T;H^1(\Omega)\bigr),
	\end{eqnarray}
\end{subequations}
and, for almost all $t\in (0,T)$,
\begin{subequations}\label{subeqs:Equations}
	\begin{align}
\langle \partial_t \phi ,\nu \rangle + \varepsilon \,\aiprd{\mu}{\nu}
 &= 0  \quad \forall \,\nu \in H^1(\Omega),
	\label{eq:weak-ch-a}\\
\iprd{\mu}{\psi}-\varepsilon \,\aiprd{\phi}{\psi} -
 \varepsilon^{-1}\iprd{\phi^3-\phi}{\psi} &= 0  \quad \forall \,\psi\in H^1(\Omega).
	\label{eq:weak-ch-b}
	\end{align}
\end{subequations}
Here $\langle \cdot, \cdot \rangle$ denotes the duality pairing between the spaces $H^{-1}_N(\Omega)$ and $H^1(\Omega)$, $(\cdot,\cdot)$ is the inner product of $L^2(\Omega)$, and
\begin{equation*}
  a({u},{v})=(\nabla u,\nabla v).
 \end{equation*}
The proof for the existence and uniqueness of the weak solution for \eqref{subeqs:Spaces}--\eqref{subeqs:Equations} with initial data
\begin{equation}\label{eq:InitialData}
 \phi(0)=\phi_0\in \,H^2_N(\Omega)=\{v\in H^2(\Omega):\,\partial v/\partial n=0
  \; \text{on}\; \partial\Omega\}
\end{equation}
can be found for example in \cite{Temam:1988:DynamicalSystem}.
\par
The Cahn-Hilliard equation is one of the most important and widely used equations in modeling two-phase phenomena. Originally developed to model phase separation of a binary alloy, often referred to as spinodal decomposition \cite{Cahn:1961,Cahn:1958,EZ:1986:CH}, variations of the Cahn-Hilliard equation have become popular components in modeling systems which describe physical processes such as two phase fluid flow, Hele-Shaw flows, copolymer fluids, crystal growth, and more (cf. \cite{CS:2018:CHPhaseField,CLSWW:2018:CHWillmore,CMW:2011:MinCahnHilliard,Feng:2006:NSCH,LLG:2002:HeleShaw,vTBVL:2009:PFCM} and the references therein). Due to the complexity of many of these systems along with their applications to physical models, the development of accurate, stable, and efficient numerical methods to solve the Cahn-Hilliard equation is still of high current interest (cf. \cite{AM:2018:FracCH,FGLWW:2018:FuncCH,FKLL:2018:FDCH,SongShu:2017:CHDG,YZWS:2017:3CompCH,WZZZ:2018:WeakGCH} and see \cite{BDS:2018:CHMGSolver} for earlier references).  Higher order numerical methods are important in this regard due to the accelerated convergence of these methods (cf. \cite{AKW:2018:SODGCahnHilliard, GGT:2014:TimeAdaptiveCH, GWWY:2016:H2FDCH, HBYT:2017:SOPF, LS:2015:HighOrdCH, YCWW:2018:BDFCahnHilliard, ZCHW:2015:TwoGridGH} and the references therein).
\par
In this paper, we consider a robust and efficient solver for the mixed finite element method for \eqref{eq:CH-mixed-a-alt}--\eqref{eq:CH-mixed-b-alt} developed in \cite{DWW:2016:SOCH}. The time discretization for this method is based on observing that the energy \eqref{eq:ch-energy} can be represented as the difference between two purely convex functionals \cite{Eyre:1998:ConvexSplitting}. In order to achieve unconditional stability along with second order in time convergence, a mixture of time stepping techniques is used when discretizing the equation relating to the chemical potential \eqref{eq:CH-mixed-b-alt}. In observing this equation, we note that the chemical potential is represented as the sum of three terms with regard to the phase field variable. We then treat each term as follows: a secant method defined by $\frac{\Psi(\phi^{m+1}) - \Psi(\phi^m)}{\phi^{m+1} - \phi^m}$ is applied to the cubic term where $\Psi(\phi) = \phi^4$, a second order Adams-Bashforth discretization is applied to the linear term, and a trapezoidal rule is applied to the advection term. The numerical method can then be described as a splitting scheme in time given by
\begin{align*}
\frac{\phi^{m+1} - \phi^m}{\tau} &= \varepsilon \Delta \mu^{m+\frac{1}{2}},
\\
\mu^{m+\frac{1}{2}} &= \frac{1}{4\varepsilon} \frac{\Psi(\phi^{m+1}) - \Psi(\phi^m)}{\phi^{m+1} - \phi^m} - \frac{1}{\varepsilon} \left(\frac{3}{2}\phi^m - \frac{1}{2}\phi^{m-1}\right) - \varepsilon \Delta\left(\frac{3}{4}\phi^{m+1} + \frac{1}{4}\phi^{m-1}\right),
\end{align*}
where $\tau$ is the time step size, and a spacial discretization that employs second order Lagrange finite elements. Fast solvers for other numerical schemes for the Cahn-Hilliard equation can be found in \cite{AKW:2013:DGMGCH,CLSWW:2018:CHWillmore,GuoXu:2014:CHDGsolvers,KayWelford:2006:MGCH,SKLK:2013:ParallelMG}.
\par
The remainder of this paper is organized as follows.  The mixed finite element method is introduced in Section~\ref{sec:SOMFEM}, followed by the construction and analysis of the  solver in Section \ref{sec:Solver}. Numerical results that demonstrate the performance of the solver are presented in Section \ref{sec:Numerics}, and we end the paper with some concluding remarks in Section~\ref{sec:Conclusion}.

\section{A Second Order Mixed Finite Element Method}
	\label{sec:SOMFEM}

Let $M$ be a positive integer, $0=t_0 < t_1 < \cdots < t_M = T$ be a uniform partition of $[0,T]$ and $\mathcal{T}_h$ be a quasi-uniform family of triangulations of $\Omega$ (cf. \cite{BScott:2008:FEM}). Furthermore, we consider the Lagrange finite element space $S_h\subset H^1(\Omega)$ given by
\begin{equation*}
  S_h=\{v\in C(\bar\Omega):\, v|_K \in {\mathcal P}_2(K)\,
  \forall \,\,  K\in \mathcal{T}_h \},
\end{equation*}
and define
\begin{equation*}
\Soh = S_h\cap L_0^2(\Omega),
\end{equation*}
where $L_0^2(\Omega)$ is the space of square integrable functions with zero mean.

The second-order (in time and space) splitting scheme for the Cahn-Hilliard equation we consider for the development of our robust solver is defined as follows \cite{DWW:2016:SOCH}:  for any $1\le m\le M-1$, given  $\phih^{m}, \phih^{m-1} \in S_h$, find $\phih^{m+1},\muh^{m+\frac12} \in S_h$ such that
	\begin{subequations}
	\label{subeqs:Schemea}
	\begin{align}
\iprd{\dtau \phih^{m+1}}{\nu} + \varepsilon \,\aiprd{\muh^{m+\frac12}}{\nu} &= \, 0  & \forall \, \nu \in S_h ,
	\label{eq:soch-scheme-a}
	\\
\varepsilon^{-1} \,\iprd{\chi\left(\phih^{m+1},\phih^m\right)}{\psi} - \varepsilon^{-1} \iprd{\frac{3}{2}\phih^m  - \frac{1}{2}\phih^{m-1} }{\psi}  & &
	\nonumber
	\\
+ \varepsilon \,\aiprd{\frac{3}{4}\phih^{m+1} + \frac{1}{4}\phih^{m-1}}{\psi}- \iprd{\muh^{m+\frac12}}{\psi} &= \, 0 & \forall \, \psi\in S_h,
	\label{eq:soch-scheme-b}
	\end{align}
	\end{subequations}
where
	\begin{align}
\dtau \phih^{m+1} &:= \frac{\phih^{m+1} - \phih^{m}}{\tau}, \quad \phih^{m+\frac12} := \frac12 \phih^{m+1} + \frac12 \phih^m, \quad \chi\left(\phih^{m+1},\phih^m\right) := \frac{1}{2}\left(\left(\phih^{m+1}\right)^2 + \left(\phih^{m}\right)^2\right)\phih^{m+\frac12}.
	\end{align}
Since this is a multi-step scheme, it requires a separate initialization process. For the first step, the scheme is as follows: given $\phih^0 \in S_h$, find $\phih^1, \muh^{\frac12} \in S_h$ such that
	\begin{subequations}\label{subeqs:SchemeInitial}
	\begin{align}
\iprd{\dtau \phih^{1}}{\nu} + \varepsilon \,\aiprd{\muh^{\frac12}}{\nu} &= \, 0  & \forall \, \nu \in S_h ,
	\label{eq:soch-scheme-a-initial}
	\\
\varepsilon^{-1} \,\iprd{\chi\left(\phih^{1},\phih^0\right)}{\psi} - \varepsilon^{-1} \iprd{\phih^0}{\psi} + \frac{\tau}{2} \, \aiprd{\muh^0}{\psi}  & &
	\nonumber
	\\
+ \varepsilon \,\aiprd{\phih^{\frac12}}{\psi} - \iprd{\muh^{\frac12}}{\psi} &= \, 0  & \forall \, \psi\in S_h,
	\label{eq:soch-scheme-b-initial}
	\end{align}
	\end{subequations}
where $\phih^0 := R_h \phi_0$, and $\muh^0 := R_h \mu_0$, such that $R_h:H^1(\Omega)\rightarrow S_h$ is the Ritz projection operator for the Neumann problem
defined by
\begin{subequations}\label{subeqs:Rh}
\begin{align}
  a(R_hv-v,w)&=0 \qquad \forall\,w\in S_h, \\
  (R_h v-v,1)&=0.  \label{eq:RhMean}
\end{align}
\end{subequations}
and
	\begin{equation}
\mu_0 := \varepsilon^{-1}\left(\phi_0^3 - \phi_0\right) - \varepsilon\Delta\phi_0.
	\end{equation}

\begin{rem}
\label{rem:energy-laws}
It is important to note that the initialization scheme follows a similar energy law as that of \eqref{eq:ch-energy} and the second order finite element method \eqref{eq:soch-scheme-a}--\eqref{eq:soch-scheme-b} satisfies a modification of this energy law. Let $(\phih^{1}, \muh^{\frac12}) \in S_h\times S_h$ be the unique solution of the initialization scheme \eqref{eq:soch-scheme-a-initial} -- \eqref{eq:soch-scheme-b-initial} and let $(\phih^{m+1}, \muh^{m+\frac12}) \in S_h\times S_h$ be the unique solution of  \eqref{eq:soch-scheme-a} -- \eqref{eq:soch-scheme-b}.  Then the following energy laws hold for any $h,\,  \tau >0$ \cite{DWW:2016:SOCH}:
	\begin{align}
E\left(\phih^{1}\right) + \tau \varepsilon \norm{\nabla\muh^{\frac12}}{L^2}^2 &+ \frac{1}{4\varepsilon} \norm{\phih^1 - \phih^0}{L^2}^2 \le E\left(\phih^0\right) + \frac{ \varepsilon \tau^2}{4} \norm{\Delta_h \muh^0}{L^2}^2,
	\label{eq:soch-ConvSplitEnLaw-initial}
	\\
F\left(\phih^{\ell+1}, \phih^{\ell}\right) +\tau \varepsilon \sum_{m=1}^\ell \norm{\nabla\muh^{m+\frac12}}{L^2}^2 &+  \sum_{m=1}^\ell \Bigg[ \frac{1}{4\varepsilon} \norm{\phih^{m+1} - 2 \phih^m + \phih^{m-1}}{L^2}^2
	\nonumber
	\\
&+ \frac{\varepsilon}{8} \norm{\nabla \phih^{m+1} - 2 \nabla \phih^m + \nabla \phih^{m-1}}{L^2}^2 \Biggr] = F\left(\phih^1, \phih^0\right),
	\label{eq:soch-ConvSplitEnLaw}
	\end{align}
for all $1 \leq \ell \leq M-1$ where $E(\phi)$ is defined in \eqref{eq:ch-energy} and $F(\phi, \psi)$ is defined as
	\begin{equation}
F(\phi, \psi) := E(\phi) + \frac{1}{4\varepsilon} \norm{\phi - \psi}{L^2}^2 + \frac{\varepsilon}{8} \norm{\nabla \phi - \nabla \psi}{L^2}^2.
\label{eq:modified-so-energy}
	\end{equation}
\end{rem}

\begin{rem}\label{rem:stability}
The energy laws in Remark \ref{rem:energy-laws} are key properties of the solution of \eqref{subeqs:Schemea}--\eqref{subeqs:SchemeInitial}. It can be shown {\rm\cite{DWW:2016:SOCH}} that these energy laws lead to the unconditional stability estimates \newline $\phih \in L^{\infty}(0,T;L^{\infty}(\Omega))$ and $\muh \in L^{\infty}(0,T;L^{2}(\Omega))$.
Moreover, under the assumption that \newline $\phi \in  L^{\infty}(0,T;W^{1,6}(\Omega)) \cap H^1(0,T;H^3(\Omega)) \cap H^2(0,T;H^3(\Omega)) \cap H^3(0,T;L^2(\Omega)), \phi^2 \in H^2(0,T;H^1(\Omega))$, $\mu \in L^{2}(0,T;H^3(\Omega))$, and $ 0 \le \tau \le \tau_0$ for a sufficiently small $\tau_0$, the error estimate
\begin{align}\label{eq:ErrorEstimate}
\max\limits_{1 \le m \le M} \norm{\nabla \phi(m\tau) - \nabla \phih^m}{L^2}^2
+ \tau \sum_{m=1}^{M-1} \norm{\nabla \mu\left((m+\nicefrac{1}{2})\tau\right) - \nabla \muh^{m+\frac12}}{L^2}^2
\le C(\varepsilon,T) (\tau^4 + h^4)
\end{align}
holds for a positive constant $C$ that depends on $\varepsilon$ and $T$ but does not depend on $\tau$ and $h$.
 \end{rem}

A key attribute to the development of the solver in \cite{BDS:2018:CHMGSolver} was the establishment of an equivalent numerical method utilizing mean zero functions and we now show the extension to the second order finite element method. Specifically, it follows from \eqref{eq:soch-scheme-a} that $(\phih^{m+1},1) = (\phi_0,1)$ for $0\le m \le M-1$, and hence,
\begin{align}
\phih^{m+1} = \avephio  + \cirphih^{m+1} \quad \text{for} \quad 0\leq m \leq M-1,
\label{eq:ave-phi-def}
\end{align}
where $\avephio = {\iprd{\phi_0}{1}}/{\iprd{1}{1}}$ is the mean of
 $\phi_0$ over $\Omega$ and $\cirphih \in \Soh$.
 We can also write
\begin{equation}\label{eq:ave-mu-def}
\muh^{m+\frac12} = \avemuh^{m+\frac12} + \cirmuh^{m+\frac12},
\end{equation}
 where $\avemuh^{m+\frac12}$ is a constant function and $\cirmuh^{m+\frac12} \in \Soh$.
\par
 Using \eqref{eq:ave-phi-def} and \eqref{eq:ave-mu-def}, we can rewrite
 \eqref{eq:soch-scheme-a}--\eqref{eq:soch-scheme-b}  in the following equivalent form:
  For $1\le m\le M-1$, find $\cirphih^{m+1},\cirmuh^{m+1}\in \Soh$ such that
\begin{subequations}\label{subeqs:Schemeb}
\begin{align}
\iprd{\dtau \cirphih^{m+1}}{\nu} + \varepsilon \,\aiprd{\cirmuh^{m+\frac12}}{\nu} &= \, 0  & \forall \, \nu \in \Soh ,
\label{eq:chmz-scheme-a}
	\\
\varepsilon^{-1} \,\iprd{\chi\left(\cirphih^{m+1}+\avephio,\cirphih^m+\avephio\right)}{\psi} - \varepsilon^{-1} \iprd{\frac{3}{2}\cirphih^m  - \frac{1}{2}\cirphih^{m-1} }{\psi}  & &
	\nonumber
	\\
+ \varepsilon \,\aiprd{\frac{3}{4}\cirphih^{m+1} + \frac{1}{4}\cirphih^{m-1} }{\psi}- \iprd{\cirmuh^{m+\frac12}}{\psi} &= \, 0 & \forall \, \psi\in \Soh,
	\label{eq:chmz-scheme-b}
\end{align}
\end{subequations}
 where
\begin{equation*}
\dtau \cirphih^{m+1} = \frac{\cirphih^{m+1}-\cirphih^{m}}{\tau} .
\end{equation*}
Note that  we can recover the constant function $\avemuh^{m+\frac12}$  from $\cirphih^{m+1}$ and $\cirphih^{m}$
  through the relation
$$\iprd{\avemuh^{m+\frac12}}{1} = \varepsilon^{-1} \big(\chi\left(\cirphih^{m+1}+\avephio,\cirphih^m+\avephio\right) - \avephio ,{1}\big)$$
which follows from \eqref{eq:soch-scheme-b}, \eqref{eq:ave-phi-def} and \eqref{eq:ave-mu-def}.
 \begin{rem}\label{rem:unique-solvability}
 The nonlinear system \eqref{subeqs:Schemeb}
 is uniquely solvable for any mesh parameters $h, \tau$ and for any model parameters. The proof is based on convexity arguments and follows in a similar manner as that of Theorem 5 from \cite{HWWL:2009:PFCE}.
\end{rem}
\section{A Robust Solver}\label{sec:Solver}
 We will solve the nonlinear system \eqref{subeqs:Schemeb} by Newton's iteration.  Let
 $(\newtphih^{m+1},\newtmuh^{m+1})\in \Soh\times\Soh$ be the output of the $j$-th step.
    In order to advance the iteration, we need to find
 $(\delta_j \mathring{\mu}, \delta_j \mathring{\phi}) \in \Soh \times \Soh$ such that
\begin{subequations}\label{subeqs:Jacobian1}
\begin{alignat}{3}
\tau  \varepsilon \, a({\delta_j \mathring{\mu}},{\nu}) + (\nu,\delta_j \mathring{\phi})
 &= F_j(\nu) &\quad& \forall\, \nu \in \Soh,\\
 ({\delta_j \mathring{\mu}},{\psi}) - \left[ \frac{1}{4\varepsilon}
 \Bigg({\left(3(\phi_{h,j}^{m+1})^2 + 2 \phi_{h,j}^{m+1}\phi_{h}^{m} + (\phi_{h}^{m})^2\right)\delta_j \mathring{\phi}},{\psi}\Bigg)
 + \frac{3\varepsilon}{4}\,a({\delta_j \mathring{\phi}},{\psi})\right]
 &= G_j(\psi) &\quad& \forall\, \phi \in \Soh,
\end{alignat}
\end{subequations}
where $\phi_{h,j}^{m+1} = \mathring{\phi}_{h,j}^{m+1} + \avephio$ and
\begin{subequations}\label{subeqs:Jacobian2}
\begin{align}
 F_j(\nu) &= \tau \varepsilon \, a({\mathring{\mu}_{h,j}^{m+\frac12}},{\nu} )
+ ({\mathring{\phi}_{h,j}^{m+1} - \mathring{\phi}_{h}^{m}},{\nu}),
\\
G_j(\psi) &= ({\mathring{\mu}_{h,j}^{m+\frac12}},{\psi})-
\Bigg[ \varepsilon^{-1} \,\iprd{\chi\left(\phi_{h,j}^{m+1},\phi_{h}^m\right)}{\psi} - \varepsilon^{-1} \iprd{\frac{3}{2}\mathring{\phi}_{h}^m  - \frac{1}{2}\mathring{\phi}_{h}^{m-1} }{\psi}  & &
	\nonumber
	\\
&\quad+ \varepsilon \,\aiprd{\frac{3}{4}\mathring{\phi}_{h,j}^{m+1} + \frac{1}{4}\mathring{\phi}_{h}^{m-1} }{\psi} \Bigg].
\end{align}
\end{subequations}
 The next output of the Newton iteration is then given by
\begin{align}\label{eq:NextOutput}
  (\mathring{\mu}_{h,j+1}^{m+\frac12},\mathring{\phi}_{h,j+1}^{m+1})
= (\mathring{\mu}_{h,j}^{m+\frac12},\mathring{\phi}_{h,j}^{m+1}) -
       (\delta_j \mathring{\mu}, \delta_j \mathring{\phi}).
\end{align}

 The first challenge we must overcome is the inconvenient zero mean constraint.  We circumvent this constraint by reformulating
 \eqref{subeqs:Jacobian1} as the following equivalent problem:
 Find $(\delta_j \mu, \delta_j \phi ) \in S_h \times S_h$ such that
\begin{subequations}\label{subeqs:NewJacobian1}
\begin{alignat}{4}
\tau  \varepsilon \, \Big[ a({\delta_j \mu},{\nu}) +
\iprd{\delta_j \mu}{1} \iprd{\nu}{1} \Big] + (\nu,\delta_j \phi)
 = \tilde{F}_j(\nu) &\quad & \forall\, \nu \in S_h,\\
 ({\delta_j \mu},{\psi}) - \frac{3\varepsilon}{4} \iprd{\delta_j \phi}{1} \iprd{\psi}{1}&\qquad &
 \nonumber
 \\
 - \left[ \frac{1}{4\varepsilon} \Bigg({\left(3(\phi_{h,j}^{m+1})^2 + 2 \phi_{h,j}^{m+1}\phi_{h}^{m} + (\phi_{h}^{m})^2\right)\delta_j \phi},{\psi}\Bigg)
 + \frac{3\varepsilon}{4}\,a({\delta_j \phi},{\psi})\right]
 = \tilde{G}_j(\psi) &\quad & \forall\, \psi \in S_h,
\end{alignat}
\end{subequations}
 where
\begin{equation}\label{eq:NewJacobian2}
\tilde{F}_j(\nu) =
\begin{cases}
F_j(\nu) & \text{if } \nu \in \Soh \\
0 & \text{if } \nu=1
\end{cases}
\quad\text{and}\quad
\tilde{G}_j(\psi) =
\begin{cases}
G_j(\psi) & \text{if } \psi \in \Soh \\
0 & \text{if } \psi=1
\end{cases}.
\end{equation}
\par
\begin{rem}\label{rem:Equivalence}
 It is easy to check that both \eqref{subeqs:Jacobian1} and
 \eqref{subeqs:NewJacobian1} are well-posed
 linear systems and that the solution
 $(\delta_j \mathring{\mu}, \delta_j \mathring{\phi})$ of
 \eqref{subeqs:Jacobian1}--\eqref{subeqs:Jacobian2} also satisfies
 \eqref{subeqs:NewJacobian1}--\eqref{eq:NewJacobian2}.
\end{rem}
\par
 Under the change of variables
\begin{equation*}
\iprd{\delta_j \mu}{\nu}  \rightarrow (4\tau)^{-\frac{1}{4}}
\varepsilon^{-\frac{1}{2}} \iprd{\delta_j \mu}{\nu}
 \quad\text{and} \quad
 \iprd{\delta_j \phi}{\psi}  \rightarrow (4\tau)^{\frac{1}{4}}
 \varepsilon^{\frac{1}{2}} \iprd{\delta_j \phi}{\psi},
\end{equation*}
 the system \eqref{subeqs:NewJacobian1} becomes
\begin{subequations}\label{subeqs:CV}
\begin{align}
&\frac{\tau^{\frac{1}{2}}}{2} \Big[ a({\delta_j \mu},{\nu}) +
\iprd{\delta_j \mu}{1} \iprd{\nu}{1} \Big] + (\nu,\delta_j \phi)
 = \tilde{F}_j((4\tau)^{-\frac{1}{4}}
 \varepsilon^{-\frac{1}{2}} \nu),\\
 &({\delta_j \mu},{\psi}) - \frac{3\tau^{\frac12} \varepsilon^2}{2} \iprd{\delta_j \phi}{1} \iprd{\psi}{1}&&&
 \nonumber
 \\
 &\quad- \left[ \frac{ \tau^{\frac12}}{2} \Bigg({\left(3(\phi_{h,j}^{m+1})^2 + 2 \phi_{h,j}^{m+1}\phi_{h}^{m} + (\phi_{h}^{m})^2\right)\delta_j \phi},{\psi}\Bigg)
 + \frac{3 \tau^{\frac12}\varepsilon^2}{2}\,a({\delta_j \phi},{\psi})\right]
= \tilde{G}_j((4\tau)^{\frac{1}{4}} \varepsilon^{\frac{1}{2}}\psi),
\end{align}
	\label{eq:unconstrained-final}
\end{subequations}
 for all $(\nu,\psi)\in S_h\times S_h$.
\par
 Let $n_h$ be the dimension of $S_h$ and
 $\varphi_1,\ldots,\varphi_{n_h}$  be the standard nodal basis functions for $S_h$.
 The system matrix for \eqref{subeqs:CV} is given by
\begin{equation}\label{eq:SystemMatrix}
\begin{bmatrix}
\frac{\tau^\frac{1}{2}}{2} \left(\bK + \bc \bc^t\right) & \bM \\
\bM & -\frac{\tau^\frac{1}{2}}{2} {\bf J} (\phi_{h,j}^{m+1}) -
\frac{3\tau^\frac{1}{2}\varepsilon^2}{2} \left(\bK + \bc \bc^t\right)
\end{bmatrix},
\end{equation}
 where the stiffness matrix $\bf K$ is  defined by
 ${\bf K}(k,\ell)=(\nabla\varphi_k,\nabla\varphi_\ell)$,
 the mass matrix $\bf M$ is  defined by
  ${\bf M}(k,\ell)=(\varphi_k,\varphi_\ell)$, the vector $\bf c$ is  defined by
 ${\bf c}(k)=(\varphi_k,1)$, and the matrix ${\bf J} (\phi_{h,j}^{m+1})$ is defined by
   $${\bf J} (\phi_{h,j}^{m+1})(k,\ell)= \Bigg(\left(3(\phi_{h,j}^{m+1})^2 + 2 \phi_{h,j}^{m+1}\phi_{h}^{m} + (\phi_{h}^{m})^2\right)\varphi_k,\varphi_\ell\Bigg).$$
\par
Note that, since the mixed finite element method is convergent, we can expect
 $(\phi^{m+1}_{h,j})^2, \phi_{h,j}^{m+1}\phi_{h}^{m}$ and $(\phi^m_h)^2$ to be close to 1 away from an interfacial region with width $\varepsilon$.
 Therefore, for small $\varepsilon$, we can
  replace ${\bf J} (\phi_{h,j}^m)$ by $6{\bf M}$ in \eqref{eq:SystemMatrix}.
  The following result is motivated
  by this observation.
\begin{thm}\label{thm:Preconditioner}
 Let the matrices $\bf B$ and $\bf P$ be defined by
\begin{align}
{\bf B}&=\begin{bmatrix}
\frac{\tau^\frac{1}{2}}{2} (\bK + \bc \bc^t) & \bM \\
\bM & -\frac{6\tau^\frac{1}{2}}{2} \bM - \frac{3\tau^\frac{1}{2}\varepsilon^2}{2} (\bK + \bc \bc^t)
\end{bmatrix},\label{eq:block-system-simple}\\
\bP &=
 \begin{bmatrix}
\frac{\tau^\frac{1}{2}}{2}( \bK+\bc\bc^t) + \bM & {\bf 0} \\
{\bf 0} & \frac{\tau^\frac{1}{2}}{2} \varepsilon^2 (\bK+\bc\bc^t)+ \bM
\end{bmatrix},
\label{eq:preconditioner}
\end{align}
 where $0\leq \tau,\varepsilon\leq 1$.
 There exist two positive constants $C_1$ and $C_2$ independent of
 $\varepsilon$, $h$ and $\tau$ such that
\begin{equation}\label{eq:eigenvalue-condition}
C_2 \max(\tau^\frac{1}{2},\varepsilon) \le|\lambda | \le C_1 \quad
\text{\rm for any eigenvalue } \lambda \text{ of } \bP^{-1} {\bf B}.
\end{equation}
\end{thm}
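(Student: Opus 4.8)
The plan is to use that $\bP$ is symmetric positive definite and to reduce the generalized eigenvalue problem $\bB v=\lambda\bP v$ to a decoupled family of scalar $2\times 2$ problems. First I would record that $G:=\bK+\bc\bc^t$ is symmetric positive definite: the stiffness matrix $\bK$ is positive semidefinite with kernel spanned by the coefficient vector of the constant function, and on that vector $\bc\bc^t$ is strictly positive, so $G\succ 0$; since $\bM\succ 0$ as well, both diagonal blocks of $\bP$ are symmetric positive definite and hence $\bP\succ 0$. Writing $A=\tfrac{\tau^{1/2}}{2}G$ and $D=3\tau^{1/2}\bM+3\varepsilon^2 A$, the problem reads $\bB v=\lambda\bP v$ with $\bB=\left[\begin{smallmatrix}A&\bM\\ \bM&-D\end{smallmatrix}\right]$ and $\bP=\mathrm{diag}(A+\bM,\ \varepsilon^2 A+\bM)$; because $\bP\succ0$, the matrix $\bP^{-1}\bB$ is similar to the symmetric matrix $\bP^{-1/2}\bB\,\bP^{-1/2}$ and all eigenvalues $\lambda$ are real.

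For the upper bound I would argue directly at the matrix level. Writing $v=(x,y)$ and using $0\le x^tAx\le x^t(A+\bM)x$, the elementary estimate $|2x^t\bM y|\le x^t\bM x+y^t\bM y\le v^t\bP v$, and $0\le y^tDy=3\tau^{1/2}\,y^t\bM y+3\varepsilon^2\,y^tAy\le 3\,y^t(\varepsilon^2A+\bM)y$ (where $0\le\tau,\varepsilon\le1$ is used), one obtains $|v^t\bB v|\le C_1\,v^t\bP v$ with an explicit constant (for instance $C_1=5$). Since $\bP\succ0$, this shows $|\lambda|\le C_1$ for every eigenvalue. Note that the Rayleigh quotient cannot supply the lower bound, because $\bB$ is indefinite and the quotient takes the value $0$; a separate device is needed.

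The lower bound is the heart of the argument. Since $\bM\succ0$ there is an invertible $S$ with $S^t\bM S=\bI$ and $S^tGS=\Lambda=\mathrm{diag}(\theta_1,\dots,\theta_{n_h})$, $\theta_i>0$. Applying the congruence $\mathrm{diag}(S,S)$ to $\bB$ and $\bP$, which preserves the spectrum of the pencil, turns every block into a diagonal matrix, so after reordering the unknowns as $(x_1,y_1,\dots,x_{n_h},y_{n_h})$ the problem splits into $n_h$ independent $2\times2$ generalized eigenvalue problems with data $\alpha_i=\tfrac{\tau^{1/2}}{2}\theta_i$, $d_i=3\tau^{1/2}+3\varepsilon^2\alpha_i$, $p_i=\alpha_i+1$, $q_i=\varepsilon^2\alpha_i+1$. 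Each gives the characteristic quadratic $p_iq_i\lambda^2+(p_id_i-\alpha_iq_i)\lambda-(\alpha_id_i+1)=0$, whose roots have product $-\tfrac{\alpha_id_i+1}{p_iq_i}<0$; in particular the two roots have opposite signs and $0$ is never an eigenvalue.

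Finally, for each $i$ the two roots satisfy $\min(|\lambda_+|,|\lambda_-|)=\dfrac{|\lambda_+\lambda_-|}{\max(|\lambda_+|,|\lambda_-|)}\ge \dfrac{1}{C_1}\cdot\dfrac{\alpha_id_i+1}{p_iq_i}$, using the upper bound already established. Thus it remains to prove the scalar inequality
\[
\frac{\alpha d+1}{pq}=\frac{3\tau^{1/2}a+3\varepsilon^2a^2+1}{\varepsilon^2a^2+(1+\varepsilon^2)a+1}\ \ge\ \tfrac12\max(\tau^{1/2},\varepsilon)
\]
(writing $a=\alpha\ge0$), uniformly in $a\ge0$ and $\tau^{1/2},\varepsilon\in(0,1]$. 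I would prove it by splitting $\max(\tau^{1/2},\varepsilon)$ into its two cases: a termwise comparison yields $\tfrac{\alpha d+1}{pq}\ge\tau^{1/2}$, while $\tfrac{\alpha d+1}{pq}\ge\tfrac12\varepsilon$ follows after discarding the nonnegative term $3\tau^{1/2}a$ and applying the arithmetic--geometric bound $3\varepsilon^2a^2+1\ge 2\sqrt3\,\varepsilon a$ to control the linear term. Combining these gives $\min|\lambda|\ge\tfrac{1}{2C_1}\max(\tau^{1/2},\varepsilon)$, i.e. $C_2=\tfrac{1}{2C_1}$. I expect this last scalar inequality to be the main obstacle: one must track how the three competing scales $\tau^{1/2}$, $\varepsilon$, and $a$ (equivalently the mesh size $h$) interact so that neither the small-$\varepsilon$ nor the small-$\tau$ regime collapses the lower bound, which is exactly what forces the factor $\max(\tau^{1/2},\varepsilon)$.
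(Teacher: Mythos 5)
Your proof is correct and follows essentially the same route as the paper: you simultaneously diagonalize the pencil via the $\bM$-orthonormal eigenbasis of $\bM^{-1}(\bK+\bc\bc^t)$ (the paper phrases this as invariant two-dimensional subspaces of $\bP^{-1}\bB$), reduce to $2\times 2$ problems, and obtain the lower bound as the determinant $\frac{1+3\tau^{1/2}a+3\varepsilon^2a^2}{1+(1+\varepsilon^2)a+\varepsilon^2a^2}$ divided by the upper bound, proving the same two scalar inequalities ($\ge \tau^{1/2}$ termwise, $\ge \varepsilon/2$ via an elementary quadratic bound). The only cosmetic difference is your upper bound $|v^t\bB v|\le 5\,v^t\bP v$ via the Rayleigh quotient, where the paper instead uses $\|\bC_j\|_\infty\le 4$ on the reduced $2\times 2$ blocks.
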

\begin{proof} A simple calculation shows that
\begin{align*}
 \bP^{-1}\bB&=\left(\begin{bmatrix} \bM & 0\\ 0&\bM\end{bmatrix}
     \begin{bmatrix} \frac{\tau^\frac{1}{2}}{2} \tK +\bI& 0 \\
      0 &\frac{\tau^\frac{1}{2}}{2}\varepsilon^2\tK+\bI
 \end{bmatrix}\right)^{-1}
 \left(\begin{bmatrix} \bM & 0\\ 0&\bM\end{bmatrix}
\begin{bmatrix}
   \frac{\tau^\frac{1}{2}}{2}\tK &\bI\\
   \bI&-\frac{6\tau^\frac{1}{2}}{2}\bI-\frac{3\tau^\frac{1}{2}\varepsilon^2}{2}\tK
 \end{bmatrix}\right)\\
   &=\begin{bmatrix} \frac{\tau^\frac{1}{2}}{2} \tK +\bI& 0 \\
      0 &\frac{\tau^\frac{1}{2}}{2}\varepsilon^2\tK+\bI
 \end{bmatrix}^{-1}
 \begin{bmatrix}
   \frac{\tau^\frac{1}{2}}{2}\tK &\bI\\
   \bI&-\frac{6\tau^\frac{1}{2}}{2}\bI-\frac{3\tau^\frac{1}{2}\varepsilon^2}{2}\tK
 \end{bmatrix},
\end{align*}
 where $\tK=\bM^{-1}(\bK+\bc\bc^t)$ and $\bI$ is the $n_h\times n_h$ identity matrix.
\par
 By the spectral theorem, there exist $\bv_1,\ldots,\bv_{n_h}\in\mathbb{R}^{n_h}$ and
 positive numbers $\kappa_1,\ldots,\kappa_{n_h}$ such that
\begin{equation*}
  \tK \bv_j=\kappa_j \bv_j \qquad\text{for}\quad 1\leq j\leq n_h
\end{equation*}
 and
\begin{equation*}
     \bv_j^t\bM\bv_\ell=\begin{cases} 1 &\quad \text{if $j=\ell$}\\
           0 &\quad\text{if $j\neq\ell$} \end{cases} \;.
\end{equation*}
\par
 Observe that the two dimensional space  $\bV_j$ spanned by
\begin{equation*}
  \begin{bmatrix} \bv_j \\ 0 \end{bmatrix} \quad \text{and} \quad
  \begin{bmatrix} 0 \\ \bv_j\end{bmatrix}
\end{equation*}
 is invariant under $\bP^{-1}\bB$ and
\begin{equation*}
 \bP^{-1}\bB\left(\alpha\begin{bmatrix} \bv_j \\ 0 \end{bmatrix} +\beta
     \begin{bmatrix} 0 \\ \bv_j\end{bmatrix}\right)=\gamma
     \begin{bmatrix} \bv_j \\ 0 \end{bmatrix} +\delta
     \begin{bmatrix} 0 \\ \bv_j\end{bmatrix},
\end{equation*}
 where
\begin{equation*}
  \begin{bmatrix} \gamma\\ \delta \end{bmatrix}=
  \begin{bmatrix} \frac{\tau^\frac{1}{2}}{2}\kappa_j+1&0\\ 0&\frac{\tau^\frac{1}{2}}{2}\varepsilon^2\kappa_j+1
  \end{bmatrix}^{-1}
  \begin{bmatrix} \frac{\tau^\frac{1}{2}}{2}\kappa_j & 1\\ 1& -\frac{6\tau^\frac{1}{2}}{2}
  -\frac{3\tau^\frac{1}{2}\varepsilon^2}{2}\kappa_j \end{bmatrix}
  \begin{bmatrix}\alpha \\ \beta \end{bmatrix}.
\end{equation*}
\par
 It follows that the eigenvalues of $\bP^{-1}\bB$ are precisely the eigenvalues of the matrix
\begin{align*}
\bC_j&=\begin{bmatrix}
   \frac{\tau^\frac{1}{2}}{2}\kappa_j+1&0\\ 0&\frac{\tau^\frac{1}{2}}{2}\varepsilon^2\kappa_j+1
   \end{bmatrix}^{-1}
     \begin{bmatrix}
     \frac{\tau^\frac{1}{2}}{2}\kappa_j & 1\\ 1& -\frac{6\tau^\frac{1}{2}}{2}
  -\frac{3\tau^\frac{1}{2}\varepsilon^2}{2}\kappa_j
     \end{bmatrix}\\
   &=\begin{bmatrix}
    \d  \frac{\frac{\tau^\frac{1}{2}}{2}\kappa_j }{\frac{\tau^\frac{1}{2}}{2}\kappa_j+1}&\d
    \frac{1}{\frac{\tau^\frac{1}{2}}{2}\kappa_j+1}\\[20pt]
   \d  \frac{1}{\frac{\tau^\frac{1}{2}}{2}\varepsilon^2\kappa_j+1}& \d
 \frac{-6\frac{\tau^\frac{1}{2}}{2}-3\frac{\tau^\frac{1}{2}}{2}\varepsilon^2\kappa_j }{\frac{\tau^\frac{1}{2}}{2}\varepsilon^2\kappa_j+1}
      \end{bmatrix}
\end{align*}
 for $1\leq j\leq n_h$.  Hence we only need to understand the behavior
 of the eigenvalues of the matrix
\begin{equation*}
 \bC=\begin{bmatrix}
  \d  \frac{\omega}{\omega+1}&\d  \frac{1}{\omega+1}\\[20pt]
          \d  \frac{1}{\omega\varepsilon^2+1}& \d
          \frac{-3\tau^\frac12-3\omega\varepsilon^2 }{\omega\varepsilon^2+1}
      \end{bmatrix},
\end{equation*}
 where $\omega$ is a positive number and $0<\tau,\varepsilon\leq 1$.
\par
 First of all we have
\begin{equation}\label{eq:P1}
  |\lambda|\leq \|\bC\|_\infty \leq 4
\end{equation}
 for any eigenvalue $\lambda$ of $\bC$, which implies that the second estimate in
 \eqref{eq:eigenvalue-condition} holds for $C_1=4$.
\par
  A direct calculation shows that
\begin{equation*}
 |\det\bC|=\frac{1+3\tau^\frac12\omega+3\varepsilon^2\omega^2}
 {1+(1+\varepsilon^2)\omega+\varepsilon^2\omega^2}
 \geq \frac{1+3\tau^\frac12\omega+3\varepsilon^2\omega^2}
 {1+2\omega+\varepsilon^2\omega^2}.
\end{equation*}
 On one hand we have
\begin{equation*}
 1+2\omega+\varepsilon^2\omega^2
   \leq \tau^{-\frac12}(1+3\tau^\frac12\omega+3\varepsilon^2\omega^2),
\end{equation*}
 which implies
\begin{equation}\label{eq:P2}
  |\det\bC|\geq \tau^\frac12.
\end{equation}
 On the other hand we also have
\begin{equation*}
  1+2\omega+\varepsilon^2\omega^2\leq
  \varepsilon^{-1}(1+2\varepsilon\omega+\varepsilon^2\omega^2)
  \leq 2\varepsilon^{-1}(1+\varepsilon^2\omega^2)
   \leq 2\varepsilon^{-1}(1+3\tau^\frac12\omega+3\varepsilon^2\omega^2),
\end{equation*}
 which implies
\begin{equation}\label{eq:P3}
|\det\bC|\geq \frac{\varepsilon}{2}.
\end{equation}
\par
 Putting \eqref{eq:P1}--\eqref{eq:P3} together we see that
\begin{equation*}
  4|\lambda|\geq|\det \bC|\geq \max(\tau^\frac12,\varepsilon/2)
\end{equation*}
 for any eigenvalue $\lambda$ of $\bC$.  Therefore, the first estimate in
 \eqref{eq:eigenvalue-condition} holds with $C_2=1/8$.
\end{proof}
\begin{rem}\label{rem:sharp}
We note that the preconditioner $\bP$ can also be analyzed by the theory in \cite{Zulehner:2011:RobustSaddlePt}. At the same time, the estimate \eqref{eq:eigenvalue-condition} obtained by our elementary approach is already sharp (cf. the discussion in \cite{BDS:2018:CHMGSolver}).
\end{rem}
\par
 In our numerical experiments, we use the preconditioner $\bP_*$ given by
\begin{equation}\label{eq:PStar}
 \bP_*= \begin{bmatrix}
\frac{\tau^\frac{1}{2}}{2}\bK + \bM & {\bf 0} \\
{\bf 0} & \frac{\tau^\frac{1}{2}}{2} \varepsilon^2 \bK+ \bM
\end{bmatrix}.
\end{equation}
 Since the two symmetric positive definite
 matrices $\bP$ and $\bP_*$ are spectrally equivalent, we immediately deduce from
 Theorem~\ref{thm:Preconditioner} that there exist two positive constants $C_3$ and $C_4$ independent of
 $\varepsilon$, $h$ and $\tau$ such that
\begin{equation}\label{eq:PrecondEst}
   C_4\max(\tau^\frac12,\varepsilon)\leq |\lambda|\leq C_3
\end{equation}
 for any eigenvalue $\lambda$ of $\bP_*^{-1}\bB$.
\par
 According to \eqref{eq:PrecondEst}, the performance of the  preconditioned
 MINRES algorithm (cf. \cite{Greenbaum:1997:Iterative,ESW:2014:FEFIS})
 for systems involving $\bf B$ is independent of $\tau$ and $h$ for a given $\varepsilon$,
 and also independent of $\varepsilon$ and $h$ for a given $\tau$.
  Similar behavior can also be expected for systems involving the
 matrix in \eqref{eq:SystemMatrix}.  Furthermore, the action of $(\gamma \bK + \bM)^{-1}$
  on a vector can be computed by a multigrid method, which creates large computational savings.
\begin{rem}\label{rem:varepsilon}
 Recall that matrix $\bB$ is obtained from the matrix in \eqref{eq:SystemMatrix} by replacing
 ${\bf J}(\phi_{h,j}^m)$ by $6\bM$ and its justification depends on $\varepsilon$.
 Therefore we expect to see some dependence of
 the performance of the preconditioned MINRES algorithm on $\varepsilon$
 for a given $\tau$.
\end{rem}
\begin{rem}\label{rem:SmallTau}
   When $\tau$ becomes $0$, the matrix
     $$\bB=\begin{bmatrix}
       0 & \bM \\
       \bM & 0
     \end{bmatrix}$$
 is well-conditioned. Therefore the performance of the preconditioned MINRES algorithm
 for systems involving the matrix in \eqref{eq:SystemMatrix}
 will improve as the time step size decreases.
\end{rem}
\begin{rem}\label{rem:SPP}
 Block diagonal preconditioners for saddle point systems are discussed in
 {\rm\cite{BGL:2005:SaddlePoint,MW:2011:SaddlePoint}} and the references therein.
\end{rem}

\section{Numerical Experiments}
\label{sec:Numerics}
 In this section, we report the results of
 several numerical experiments in two and three dimensions. All computations were carried out using the FELICITY MATLAB/C++ Toolbox \cite{Walker:2018:felicity}.
\par
 In the first four numerical experiments, we solve \eqref{subeqs:Schemea} on the
 unit square $\Omega=(0,1)^2$ using uniform meshes. The initial mesh $\mathcal{T}_0$ is generated by the two diagonals of $\Omega$  and the meshes $\mathcal{T}_1,\mathcal{T}_2,\ldots$ are obtained from $\mathcal{T}_0$ by uniform refinements. The system \eqref{subeqs:Schemea} (or equivalently \eqref{subeqs:Schemeb})
 is solved by the Newton iteration with a tolerance of $10^{-15}$ for $\|\delta_j\phi\|_{L^\infty(\Omega)}$ or
  a residual tolerance of $10^{-7}$ for \eqref{subeqs:NewJacobian1}--\eqref{eq:NewJacobian2},
  whichever is satisfied first.  It turns out that only one Newton iteration is needed for each time step in all the experiments.
\par
  During each Newton iteration, the systems involving
  \eqref{eq:SystemMatrix} are solved by a preconditioned MINRES
  algorithm with a residual tolerance of $10^{-7}$.  The systems
  involving the preconditioner $\bf P$ are solved by
  a multigrid $V(2,2)$ algorithm that uses the Gauss-Seidel iteration as the smoother
  (cf. \cite{Hackbusch:1985:MMA,TOS:2001:MG}).
  In all our experiments, the maximum number of preconditioned MINRES iterations occured
  during the first few time steps after which the number of iterations
  would decrease and level off.
\par
  In the first experiment, we use the initial data
\begin{equation}
\phi_{h}^0 = \mathcal{I}_h
\Big[\Big(\frac{1}{2}\Big)[1 - \cos(4\pi x_1)][1 - \cos(2\pi x_2))]- 1\Big] ,
\end{equation}
 where $\mathcal{I}_h :   H^2(\Omega) \longrightarrow S_h$ is the
 standard nodal interpolation operator. We take  $\tau = {0.002}/{64}$
 with a final  time $T=0.04$ and an interfacial width parameter of
 $\varepsilon = 0.05$. In Table~\ref{ch-tab-fixed-tau-eps}, we report the average number of preconditioned MINRES
 iterations over all time steps along with the average solution time per time step as the mesh is refined. In addition, we display the factor of increase in the average time to solve per time step from the previous mesh size to the current mesh size. (The timing mechanism is the `tic toc' command in MATLAB.)
Observe that the performance of the preconditioned MINRES algorithm does not depend on $h$ and the
 solution time per time step grows linearly with the number of degrees of freedom.
 \par
 We then run the same test using MATLAB's built in backslash command to solve. Due to MATLAB's built-in efficiency standards, MATLAB's backslash command outperforms the solver described in this paper on coarse mesh sizes. However, as the mesh is refined, one does see that the time to solve using MATLAB's backslash command approaches the quadratic growth one expects from using a solver such as LU decomposition. By comparison, the performance of the method outlined in this paper continues to grow linearly as the mesh size is refined and the advantage is clearly observed by comparing the performance of the two solvers for the mesh size $h = \nicefrac{1}{512}$.
\begin{table}[H]
	\centering
	\begin{tabular}{c | ccc | cc}
 & \multicolumn{3}{ c }{MINRES Solver} &\multicolumn{2}{| c }{MATLAB's Backslash}
 \\
$h$ & MINRES Its. & Time to Solve (s) & Factor of Inc. & Time to Solve (s) & Factor of Inc.
	\\
	\hline
$\nicefrac{1}{8}$ & 23 & $2.07 \times 10^{-2}$ &  ~ & $5.46 \times 10^{-3}$ & ~
	\\
$\nicefrac{1}{16}$ & 26 & $5.74 \times 10^{-2}$ &2.773 & $1.85 \times 10^{-2}$ & 3.386
	\\
$\nicefrac{1}{32}$ & 38 & $3.01 \times 10^{-1} $ & 5.233 & $9.75 \times 10^{-2}$ & 5.268
	\\
$\nicefrac{1}{64}$ & 48 & $1.44 \times 10^0 $ & 4.785 & $ 5.09 \times 10^{-1}$ & 5.219
	\\
$\nicefrac{1}{128}$ & 52 & $ 6.47 \times 10^0$ & 4.499 & $2.76 \times 10^0$  & 5.432
\\
$\nicefrac{1}{256}$ & 55 & $3.44 \times 10^1$ & 5.318 &  $1.69 \times 10^1$ & 6.143
\\
$*\nicefrac{1}{512}$ & 57 & $1.85 \times 10^2$ & 5.366  & $6.42 \times 10^2$ & 37.83
\\
\hline
\end{tabular}
\caption{The average number of preconditioned MINRES iterations over all
 time steps together with the average solution time per time step as the mesh
 is refined ($\Omega = (0,1)^2, \tau = \nicefrac{0.002}{64}$, $T =0.04$
 $\varepsilon=0.05$.) The star above indicates that the final stopping time $T$ was cut short for the test run utilizing MATLAB's backslash command due to the large computational time. }
	\label{ch-tab-fixed-tau-eps}
\end{table}
\par
The purpose of the second experiment is to compare the performance of the solver developed for the second order finite element method presented in this paper with that of the solver developed for the first order finite element method presented in \cite{BDS:2018:CHMGSolver}. We choose an initial condition of the oval described by $$\phi_{h}^0 = \mathcal{I}_h
\left[-1.01\tanh\left(\frac{\nicefrac{(x_1-0.5)^2)}{0.075}+ \nicefrac{(x_2-0.5)^2}{0.05}-1}{2\sqrt{\varepsilon}}\right)\right]$$ as shown in the Figure \ref{fig:oval_test}. We fix $\varepsilon=0.03$, $\tau = \nicefrac{0.07}{256}$, and a final stopping time of $T=0.7$.

It is well known that exact solutions are difficult to construct for the two dimensional Cahn-Hilliard equation without adding an artificial source term. Therefore, in order to obtain an accurate comparison of the two solvers, we have chosen the solution to the second order scheme with a mesh size of $h=\nicefrac{1}{256}$ (which corresponds to $525,313$ nodes) along with a time step size of $\tau = \nicefrac{0.07}{256}$ as our best estimate of an exact solution. The error between the solution to the second order scheme and the exact solution should be less than or equal to $C(\epsilon,T)\cdot \left(\left(\nicefrac{1}{256}\right)^2 + \left(\nicefrac{0.07}{256}\right)^2\right) \approx C(\epsilon,T)\left(1.5\times10^{-5}\right)$ \cite{DWW:2016:SOCH}. We then record the $H^1(\Omega)$ errors with respect to the phase field variable $\phi$ evaluated at the final stopping time of $T=0.7$ in Table \ref{ch-tab-comparison}. We furthermore record the average time to solve per time step. We point out that a mesh size of $h=\nicefrac{1}{32}$ for the second order scheme is comparable to a mesh size of $h=\nicefrac{1}{256}$ for the first order scheme and we clearly see the advantage of the second order scheme.

 \begin{table}[H]
	\centering
	\begin{tabular}{c|cc|cc}
& \multicolumn{2}{ c }{Second Order Method}  &  \multicolumn{2}{| c }{First Order Method}
 \\
$ h $  & $H^1(\Omega)$ Error & Time to Solve (s) & $H^1(\Omega)$ Error & Time to Solve (s)
\\
\hline
$\nicefrac{1}{16}$  & $1.89 \times 10^{-1} $  & $1.09 \times 10^{-1}$ & $6.04 \times 10^{-1}$ & $ 3.54 \times 10^{-2}$
	\\
$\nicefrac{1}{32}$  &  \framebox[1.1\width]{$ 3.22 \times 10^{-2}$ } & \framebox[1.1\width]{$3.53 \times 10^{-1}$ } & $ 3.63 \times 10^{-1}$  & $7.71 \times 10^{-2}$
	\\
$\nicefrac{1}{64}$  &$ 4.59 \times 10^{-3}$  & $1.52 \times 10^0$ & $ 1.85 \times 10^{-1}$  & $2.39 \times 10^{-1}$
	\\
$\nicefrac{1}{128}$  & $6.44 \times 10^{-4} $ & $7.31 \times 10^0$ & $ 9.30 \times 10^{-2}$  & $1.11 \times 10^0 $
	\\
$\nicefrac{1}{256}$  & N/A & N/A & \framebox[1.1\width]{ $ 4.66 \times 10^{-2}$ }  & \framebox[1.1\width]{$5.45 \times 10^0$}
\\
	\hline
	\end{tabular}
	\caption{A comparison of the solver for the Second Order Method to the solver for the First Order Method. The parameters are as follows: $\Omega = (0,1)^2, T=0.7, \varepsilon = 0.03, \tau = \nicefrac{0.07}{256}$.}
	\label{ch-tab-comparison}
	\end{table}
	
	Additionally, a major advantage to both the first and second order schemes considered herein is that they achieve optimal order error estimates in which the mesh and time step sizes may be chosen completely independent of one another. See Remark \ref{rem:stability}. For this experiment, we have chosen a time step size small enough so as not to interfere with the errors presented in Table \ref{ch-tab-comparison}. However, it should be noted that the first order scheme considered in \cite{BDS:2018:CHMGSolver,DFW:2015:CHDS} is, in fact, first order in time and the second order scheme considered in this paper is second order in time. Therefore, we would expect to be able to take larger time step sizes using the second order scheme than when using the first order scheme to achieve comparable error estimates. The effect would be that the first order scheme would take significantly more time steps than the second order scheme in order to achieve a comparable error estimate. We would expect this to have a significant impact in the overall time to solve.
	
	For instance, if we take the time step size equal to a constant multiple of the space step size, such as $\tau=0.07h$, we would not expect the errors above to change much from those listed in Table \ref{ch-tab-comparison}. This fact is demonstrated in Table \ref{ch-tab-comparison-vt}. Additionally, if we again compare similar errors, we see that the first order method would require 2560 times steps but the second order method would only require 320 time steps. The total time to solve is approximately 134 seconds for the second order scheme versus a total time to solve of approximately 241 minutes for the first order scheme.
	
	\begin{table}[H]
	\centering
	\begin{tabular}{c|ccc|ccc}
& \multicolumn{3}{ c }{Second Order Method}  &  \multicolumn{3}{| c }{First Order Method}
 \\
$ h $  & $H^1(\Omega)$ Error & Time to Solve & Time Steps & $H^1(\Omega)$ Error & Time to Solve & Time Steps
\\
\hline
$\nicefrac{1}{16}$  & $ 1.89 \times 10^{-1}$  & $1.23 \times 10^{-1}$ & 160 &  $ 6.04 \times 10^{-1}$& $3.81 \times 10^{-2}$  & 160
	\\
$\nicefrac{1}{32}$  &  \framebox[1.1\width]{$ 3.22 \times 10^{-2}$} & \framebox[1.1\width]{$4.20\times 10^{-1}$} & \framebox[1.1\width]{320} & $ 3.63 \times 10^{-1}$  & $9.80 \times 10^{-2}$ & 320
	\\
$\nicefrac{1}{64}$  & $ 4.62 \times 10^{-3}$  & $1.46 \times 10^0$ & 640 & $ 1.85 \times 10^{-1}$ & $3.21 \times 10^{-1}$ & 640
	\\
$\nicefrac{1}{128}$  & $ 6.68 \times 10^{-4}$ & $7.37 \times 10^0$ & 1280 & $ 9.31 \times 10^{-2}$ & $1.32 \times 10^0$ & 1280
	\\
$\nicefrac{1}{256}$  & N/A & N/A & N/A & \framebox[1.1\width]{$ 4.66 \times 10^{-2}$}  & \framebox[1.1\width]{$5.65 \times 10^0$} &  \framebox[1.1\width]{2560}
\\
	\hline
	\end{tabular}
	\caption{A comparison of the solver for the Second Order Method to the solver for the First Order Method. The parameters are as follows: $\Omega = (0,1)^2, T=0.7, \varepsilon = 0.03, \tau = 0.07h$.}
	\label{ch-tab-comparison-vt}
	\end{table}
	
	In Figure \ref{fig:oval_test}, we show the figures for the initial data mentioned above and the results at the final stopping time $T=0.07$ with $\tau = 0.07h$ and $h=\nicefrac{1}{32}$ for the second order scheme and $h=\nicefrac{1}{256}$ for the first order scheme and observe their similarity.
	
	\begin{figure}[H]
\subfloat{\includegraphics[scale=.4]{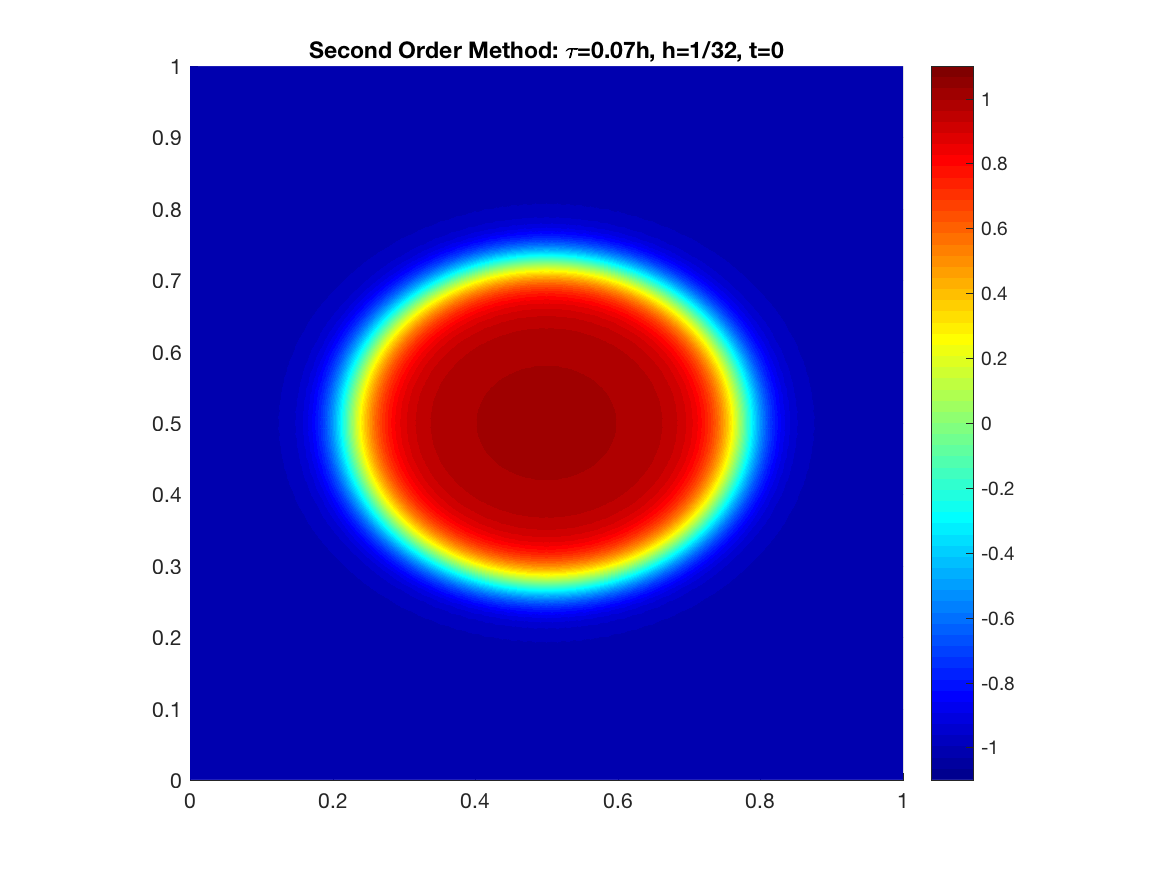}}
\subfloat{\includegraphics[scale=.4]{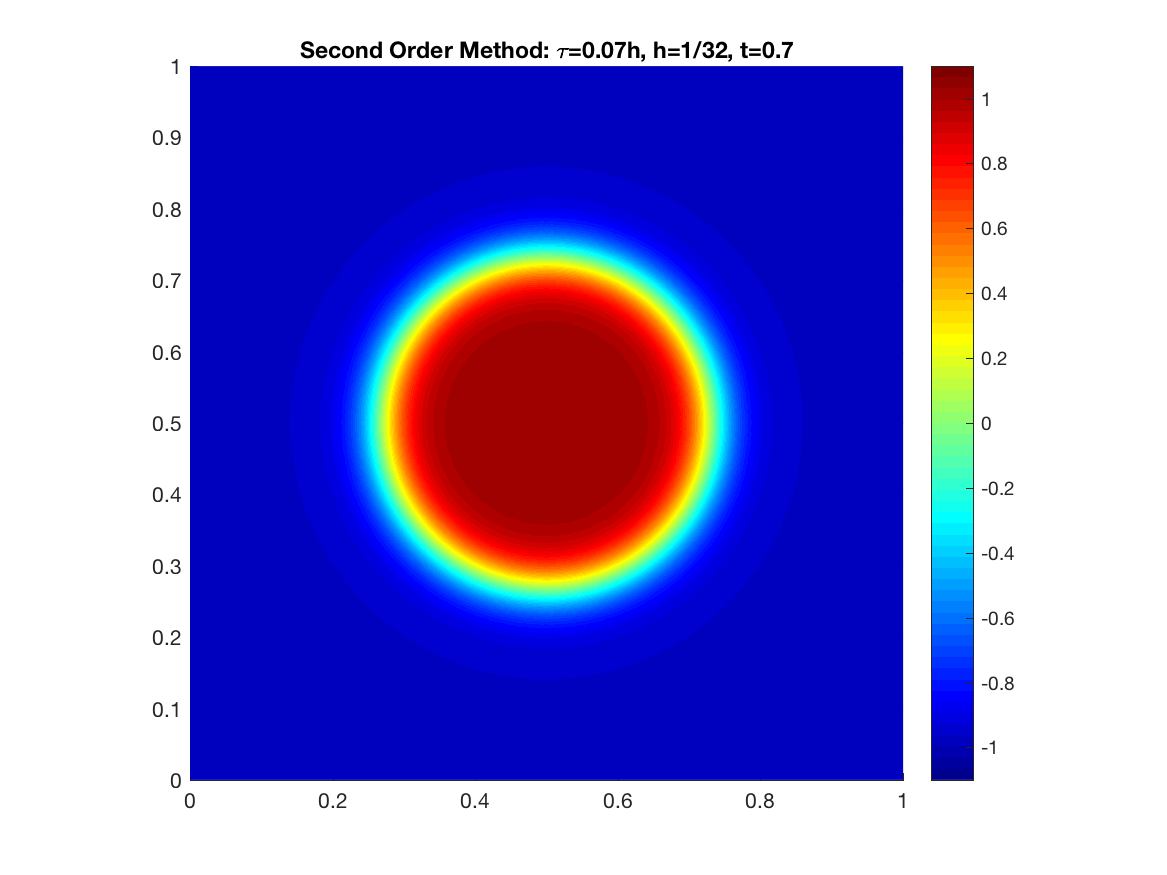}}\\
\subfloat{\includegraphics[scale=.4]{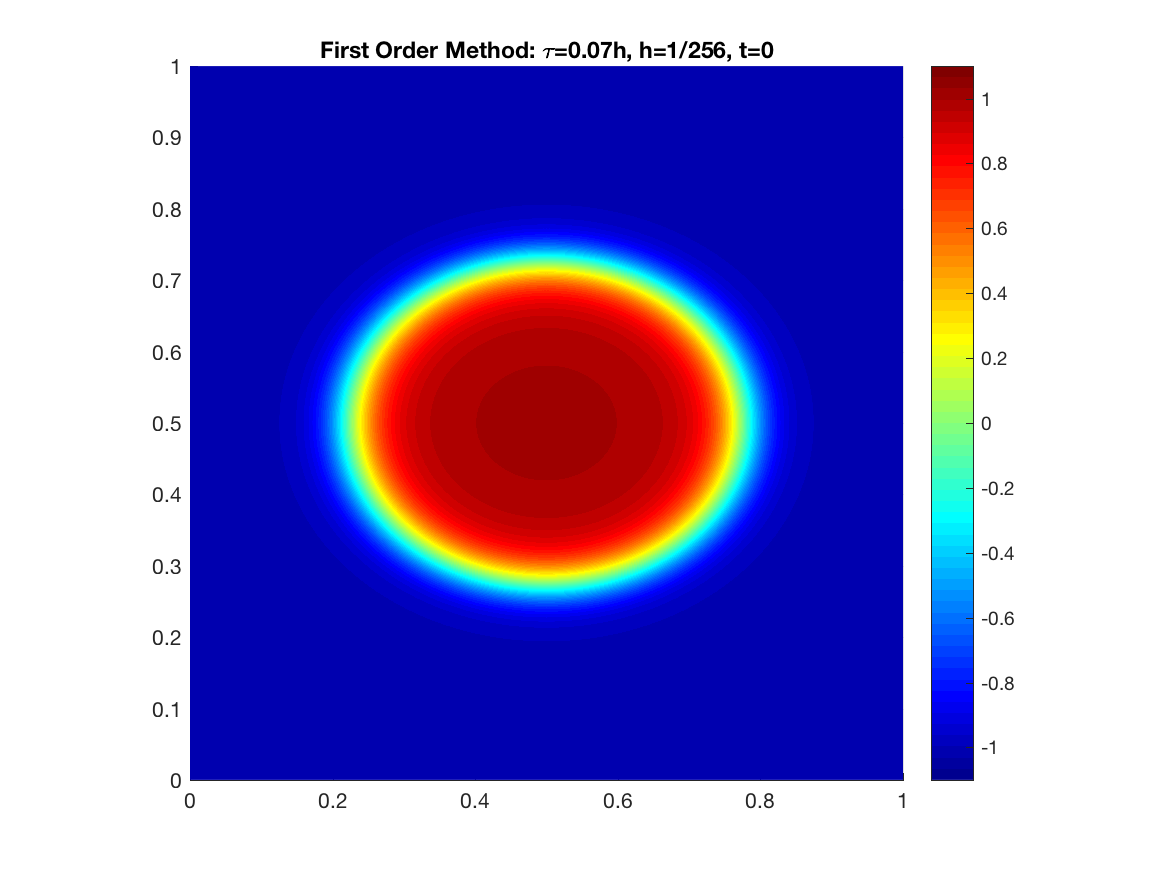}}
\subfloat{\includegraphics[scale=.4]{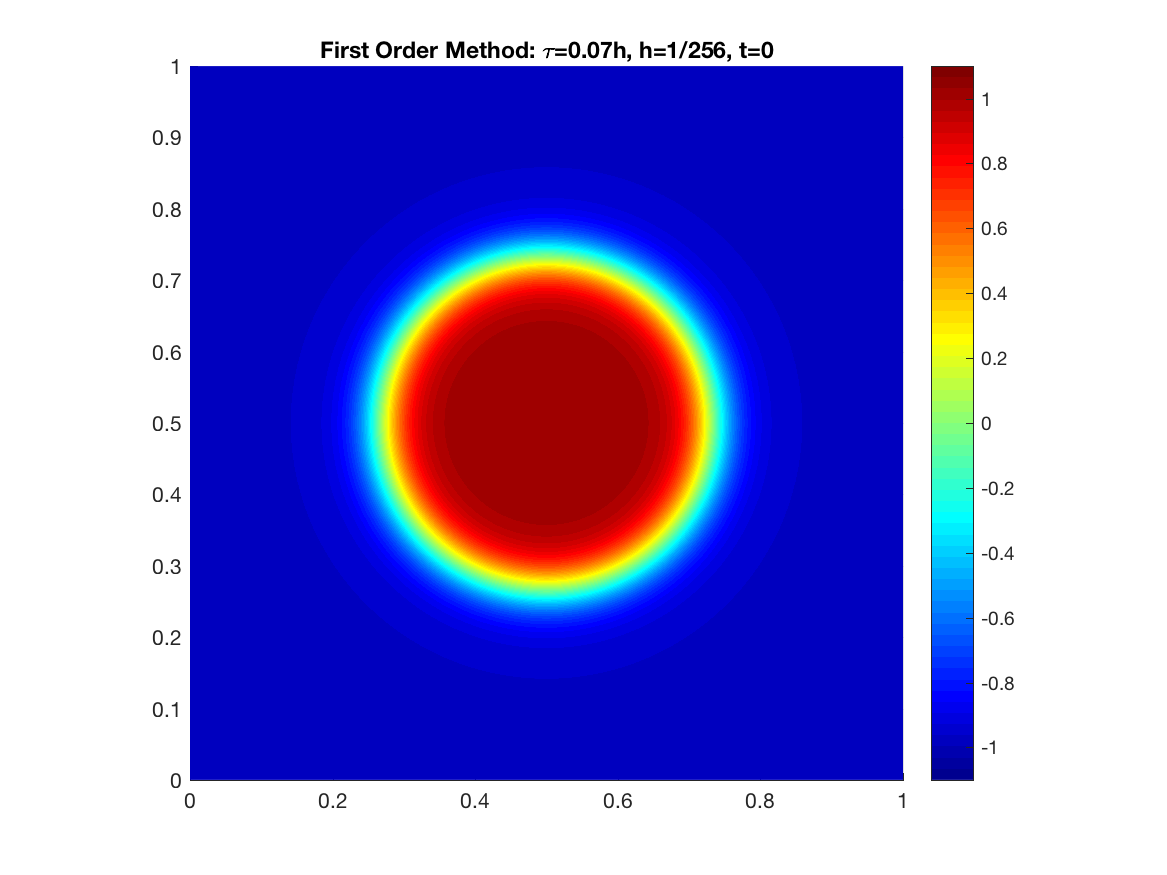}}
\caption{Motion toward a steady state on $(0,1)^2$.
 The times displayed are $t=0, t=0.7$ (from left to right). The top two pictures result from the second order scheme with a mesh size of $h=\nicefrac{1}{32}$ and the bottom two pictures result from the first order scheme with a mesh size of $h=\nicefrac{1}{256}$. The time step size was taken to be $\tau = 0.07h$ in each case.}
\label{fig:oval_test}
\end{figure}

\par
 In the third experiment, we again use the initial data
\begin{equation}
\phi_{h}^0 = \mathcal{I}_h
\Big[\Big(\frac{1}{2}\Big)[1 - \cos(4\pi x_1)][1 - \cos(2\pi x_2))]- 1\Big] ,
\end{equation}
 fix $h=1/64$, a final time
 $T=.04$, $\varepsilon=0.0625$ and $0.001$,  and refine the time step size $\tau$.
 The maximum and average number of the preconditioned MINRES iterations over all
 time steps is displayed in Table~\ref{ch-tab-fixed-h-tau} along with the average solution time per time step.
 The performance is clearly independent of the time step size $\tau$ for an interfacial width parameter of $\varepsilon=0.0625$. When the interfacial width parameter is decrease from $0.0625$ to $0.001$ the solution time roughly triples at worst, indicating again that the performance of the solver only depends mildly on $\varepsilon$.
\begin{table}[H]
	\centering
	\begin{tabular}{c|ccc|ccc}
& \multicolumn{3}{ c }{$\varepsilon = 0.0625$}  &  \multicolumn{3}{| c }{$\varepsilon = 0.001$}
 \\
$\tau$ & Max.~Its. & Avg.~Its. & Avg.~Time to Solve & Max.~Its. & Avg.~Its. & Avg.~Time to Solve
\\
\hline
$\nicefrac{.02}{8}$ & 54 & 50 & 1.62 & 126 & 53 & 1.87
	\\
$\nicefrac{.02}{16}$ & 54 & 50 & 1.58 & 132 & 55 & 1.67
	\\
$\nicefrac{.02}{32}$ & 54 & 50 & 1.48 & 139 & 58 & 1.64
	\\
$\nicefrac{.02}{64}$ & 55 & 50 & 1.38 & 141 & 72 &  2.09
	\\
$\nicefrac{.02}{128}$ & 55 & 49 & 1.36 & 158 & 86 & 2.21
	\\
$\nicefrac{.02}{256}$ & 54 & 47 & 1.24 & 173 & 98 & 2.41
	\\
$\nicefrac{.02}{512}$ & 54 & 46 & 1.20 & 171 & 105 & 2.71
	\\
$\nicefrac{.02}{1024}$ & 52 & 43 & 1.27 & 164 & 108 & 2.76
\\
$\nicefrac{.02}{2048}$ & 47 & 42 & 1.26 & 164 & 115 & 2.87
\\
$\nicefrac{.02}{4056}$ & 42 & 37 & 1.20 & 169 & 124 & 3.08
\\
	\hline
	\end{tabular}
	\caption{The maximum and average number of preconditioned MINRES iterations over
all time steps along with the average solution time per time step as the time step is refined
 ($\Omega = (0,1)^2, h = \nicefrac{1}{64}$, $T = 0.04$, $\varepsilon = 0.0625$
  (left), $\varepsilon = 0.001$ (right)).}
	\label{ch-tab-fixed-h-tau}
	\end{table}

\par
 In the fourth experiment, we show that our method accurately demonstrates motion towards a steady state.
 We take the initial conditions such that $\phi = -1$ outside of the cross and $\phi=1$ inside of the cross. The cross is constructed using the lines $x_1=0.3, 0.4, 0.6, 0.7, x_2 = 0.3, 0.4, 0.6, 0.7$. Additionally, we take $h=1/64$, $\tau=0.002/64$ and $\varepsilon=0.01$.  The surface plots for $\phi$
 at $t=0, t=10\tau, t=180\tau$, $t=500\tau, t=980\tau$ and $t=2100\tau$ are displayed in
 Figure~\ref{fig:cross-matlab}.

\begin{figure}[H]
\subfloat{\includegraphics[width = 2in]{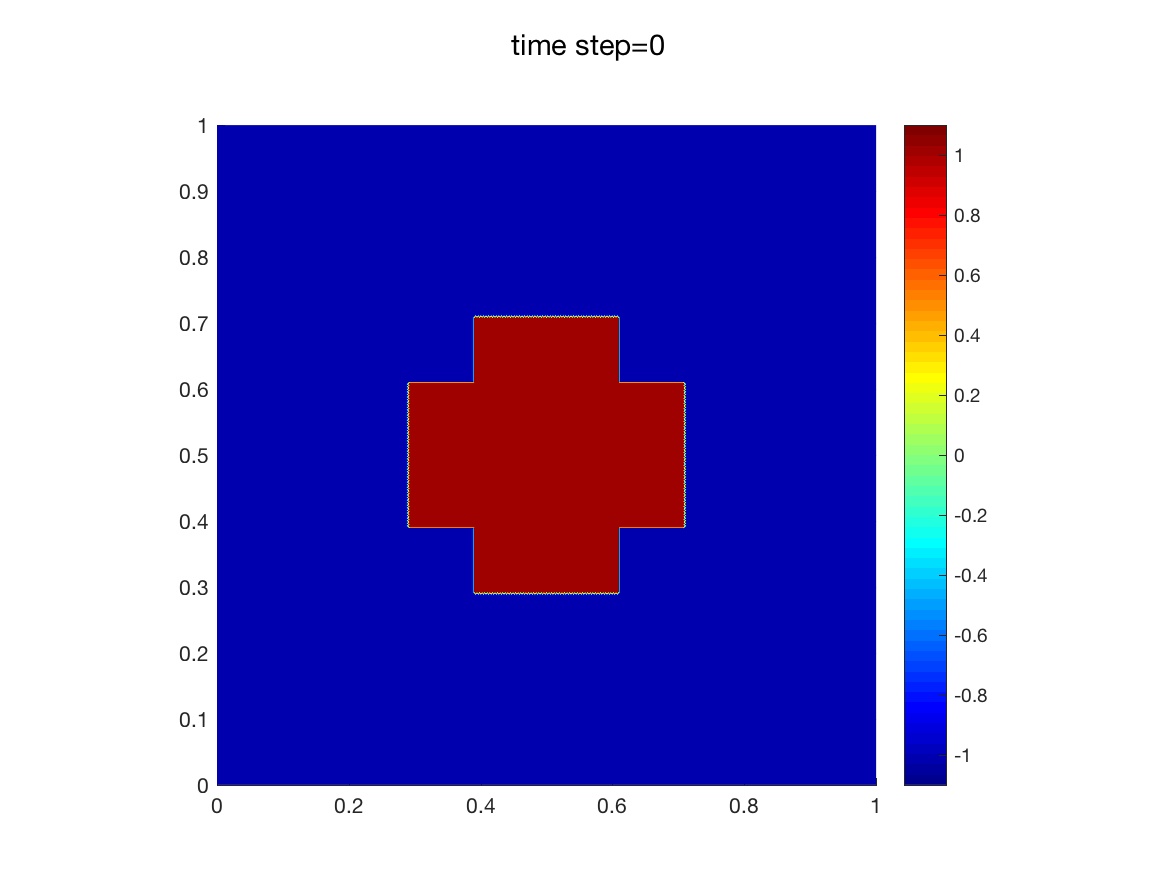}}
\subfloat{\includegraphics[width = 2in]{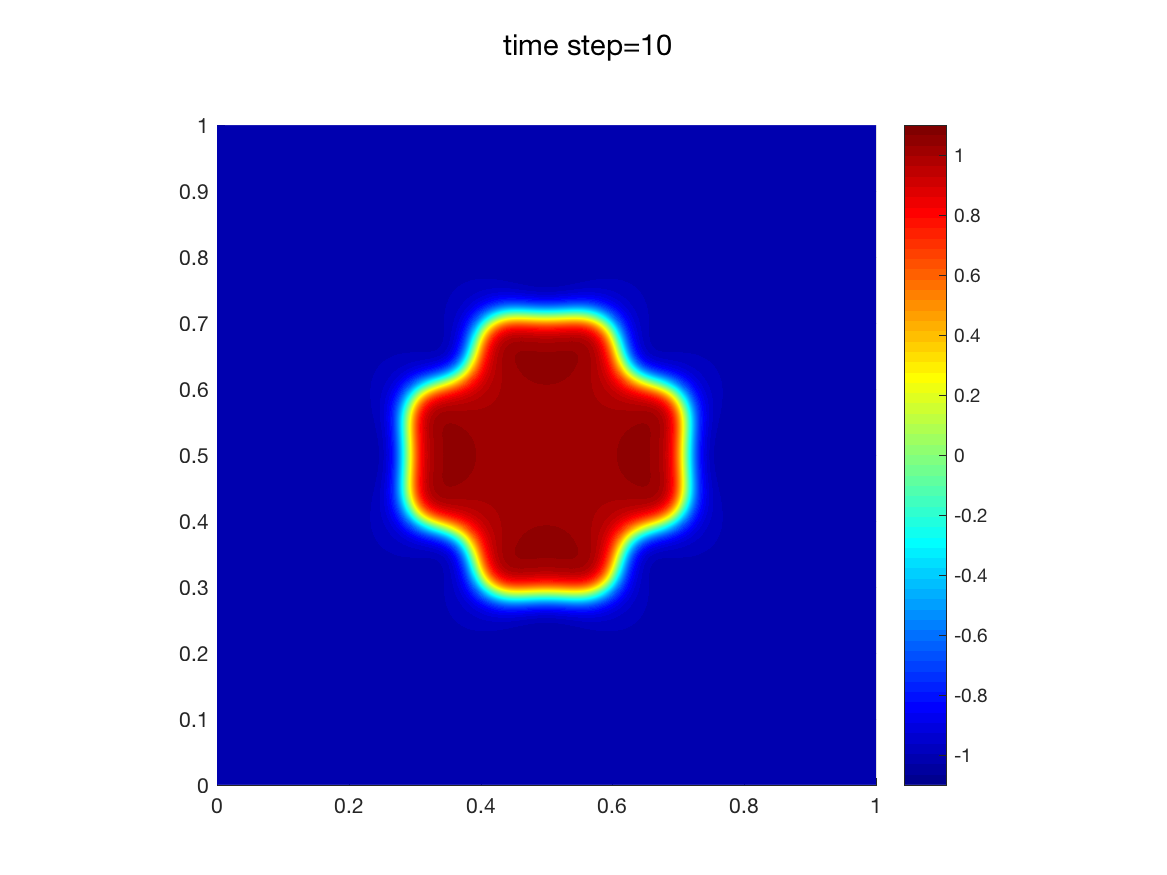}}
\subfloat{\includegraphics[width = 2in]{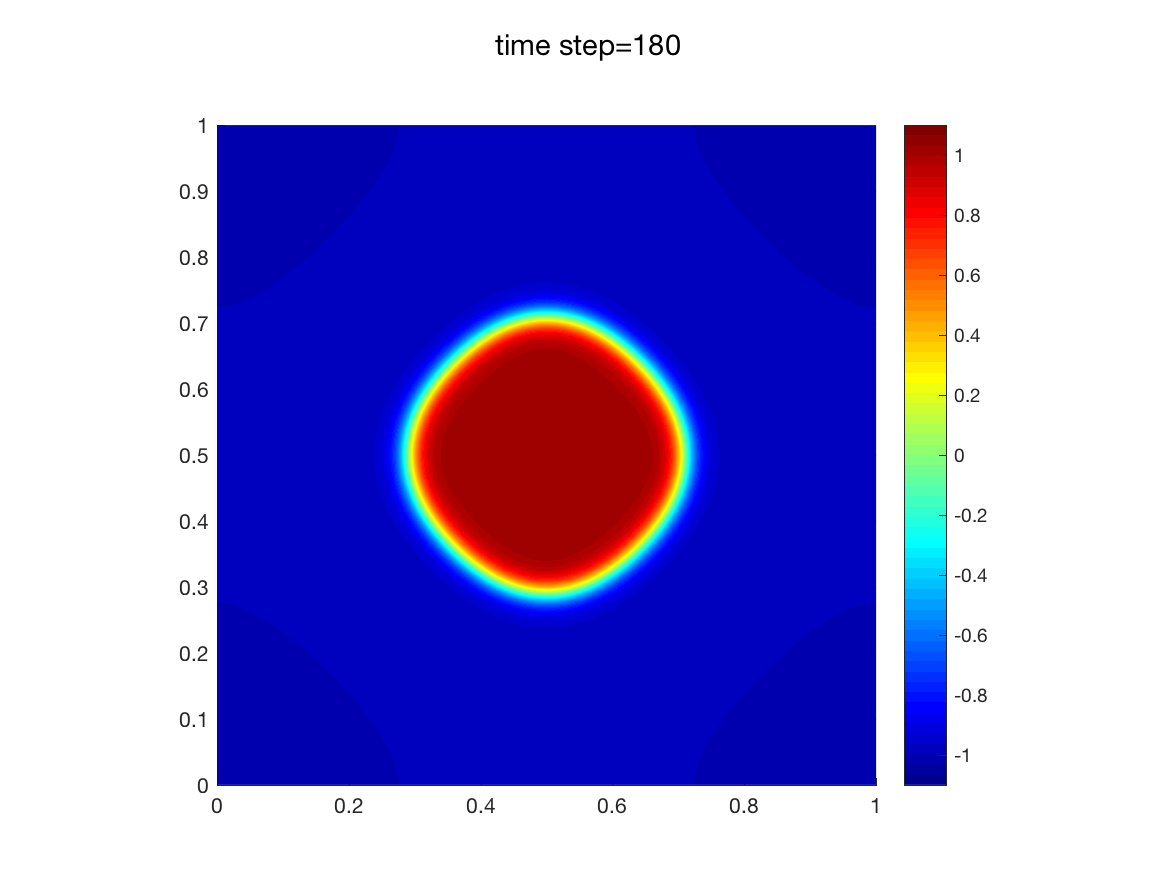}} \\
\subfloat{\includegraphics[width = 2in]{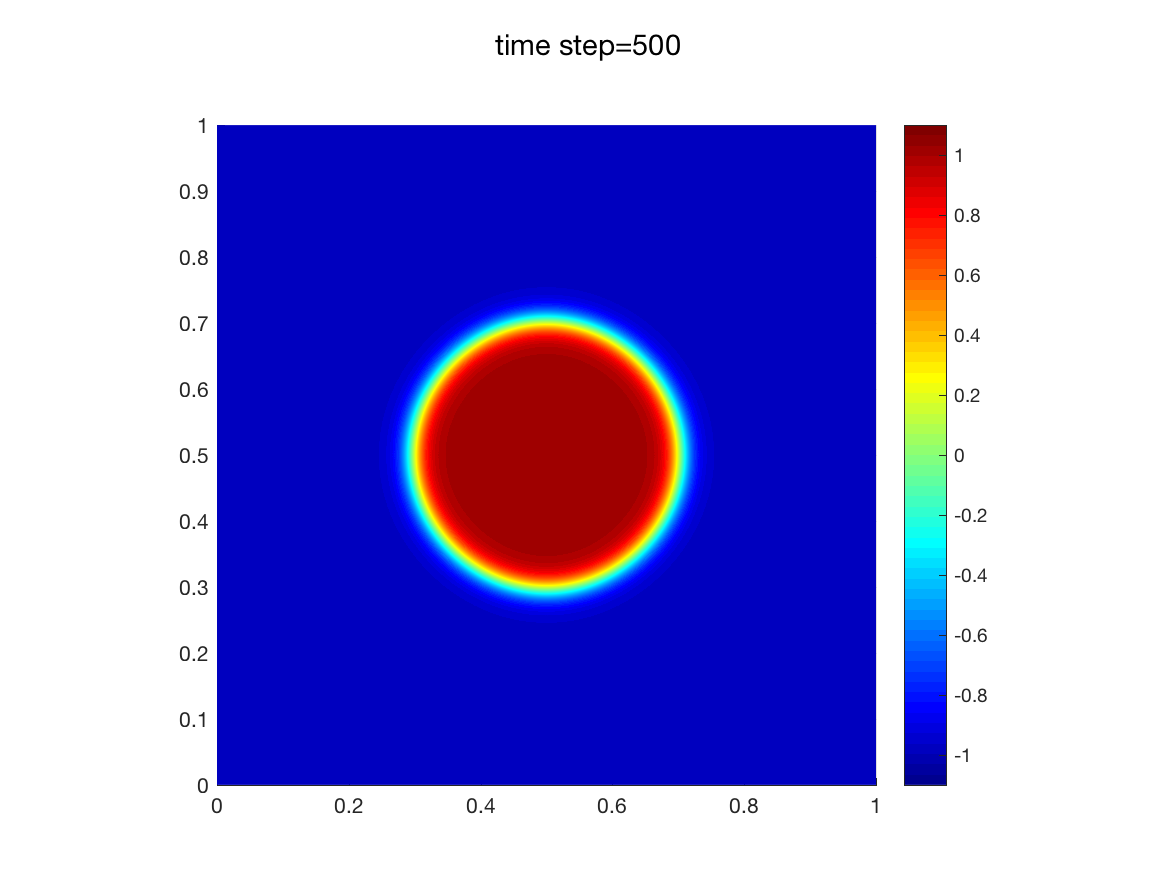}}
\subfloat{\includegraphics[width = 2in]{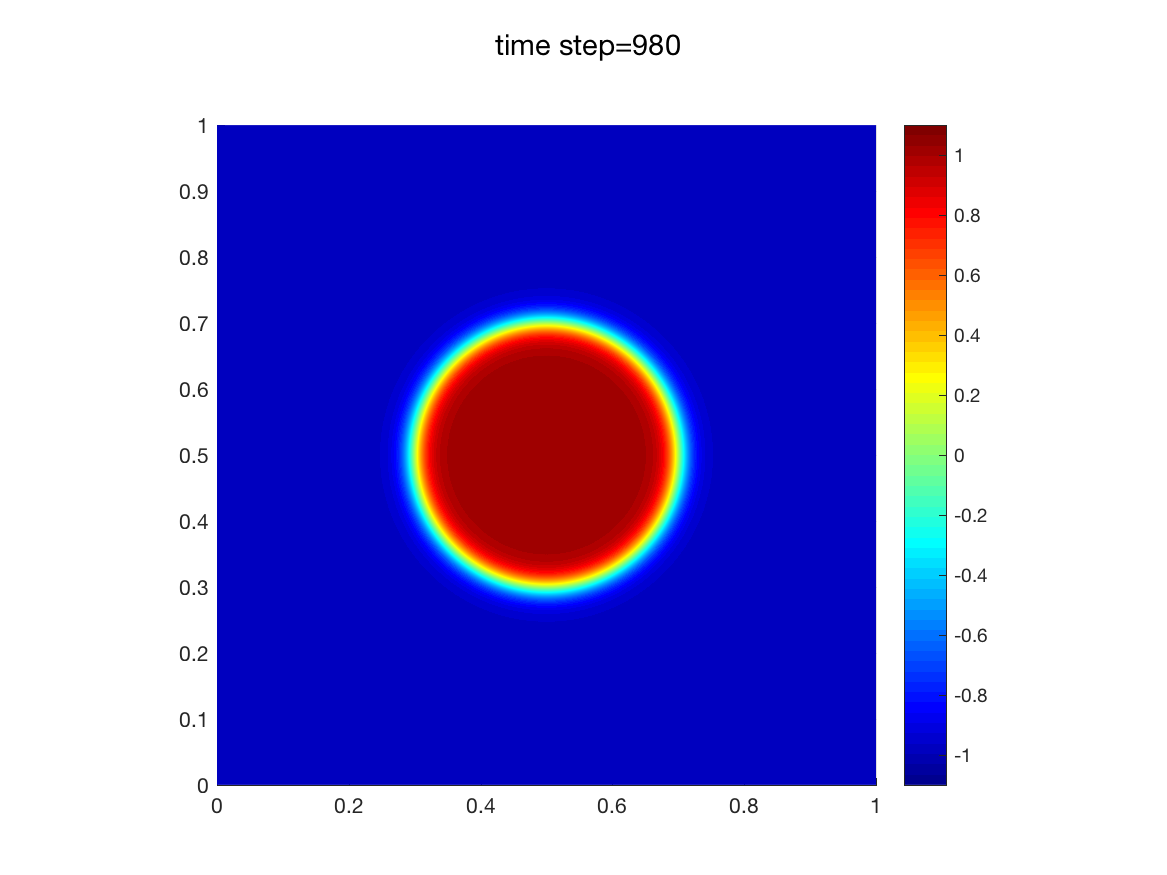}}
\subfloat{\includegraphics[width = 2in]{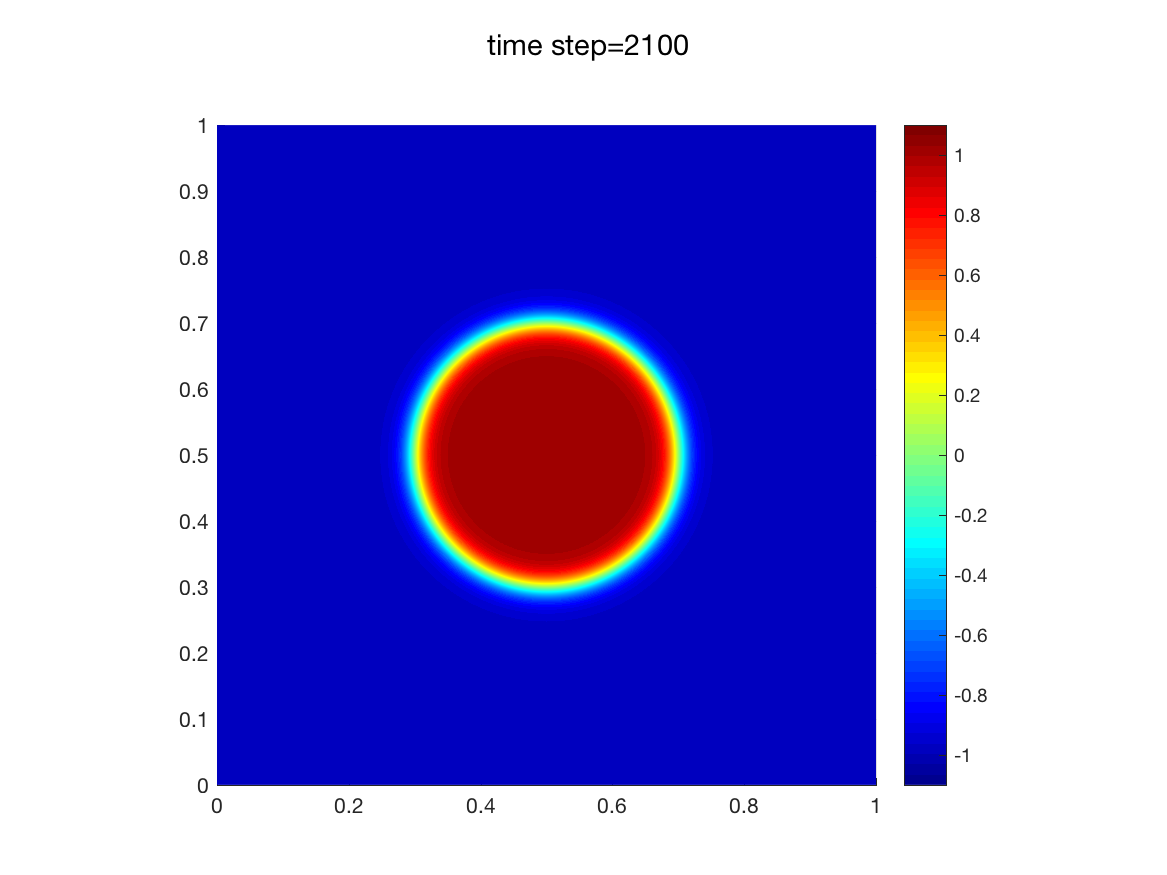}}\\
\caption{Motion towards a steady state of a binary fluid on $(0,1)^2$.
 The times displayed are $t=0, t=10\tau, t=180\tau$ (top from left to right) and $t=500\tau, t=980\tau, t=2100\tau$ (bottom from left to right).}
\label{fig:cross-matlab}
\end{figure}

\par
 In the final experiment, we solve the Cahn-Hilliard equation on the unit cube $\Omega = (0,1)^3$
  with an
 initial condition $$\phi_{h}^0 = \mathcal{I}_h
\left[-1.01\tanh\left(\frac{\nicefrac{(x_1-0.5)^2)}{0.075}+ \nicefrac{(x_2-0.5)^2}{0.05} + \nicefrac{(x_3-0.5)^2}{0.05}-1}{2\sqrt{\varepsilon}}\right)\right]$$ 
 that represents a droplet elongated along the $x_1$-axis,
 as depicted  in Figure \ref{fig:3D-minres}.  
 The initial mesh $\mathcal{T}_0$
 consists of six tetrahedrons and the meshes $\mathcal{T}_1, \mathcal{T}_2, \cdots$
 are obtained from $\mathcal{T}_0$ by uniform refinements.
 We take $\varepsilon=0.03$, $\tau=0.002/32$, and a final time $T=0.1$ and
 refine the mesh four times so that $h=\nicefrac{\sqrt{3}}{32}$.
\par

	Isocap plots for $\phi$ at $t=0, t=0.05,$ and $t=0.1$ are displayed in Figure \ref{fig:3D-minres}. We note that the average time to solve per time step was approximately $18.82$s and the average number of MINRES iterations was approximately 42.
	
\begin{figure}[H]
\subfloat{\includegraphics[width = 2in]{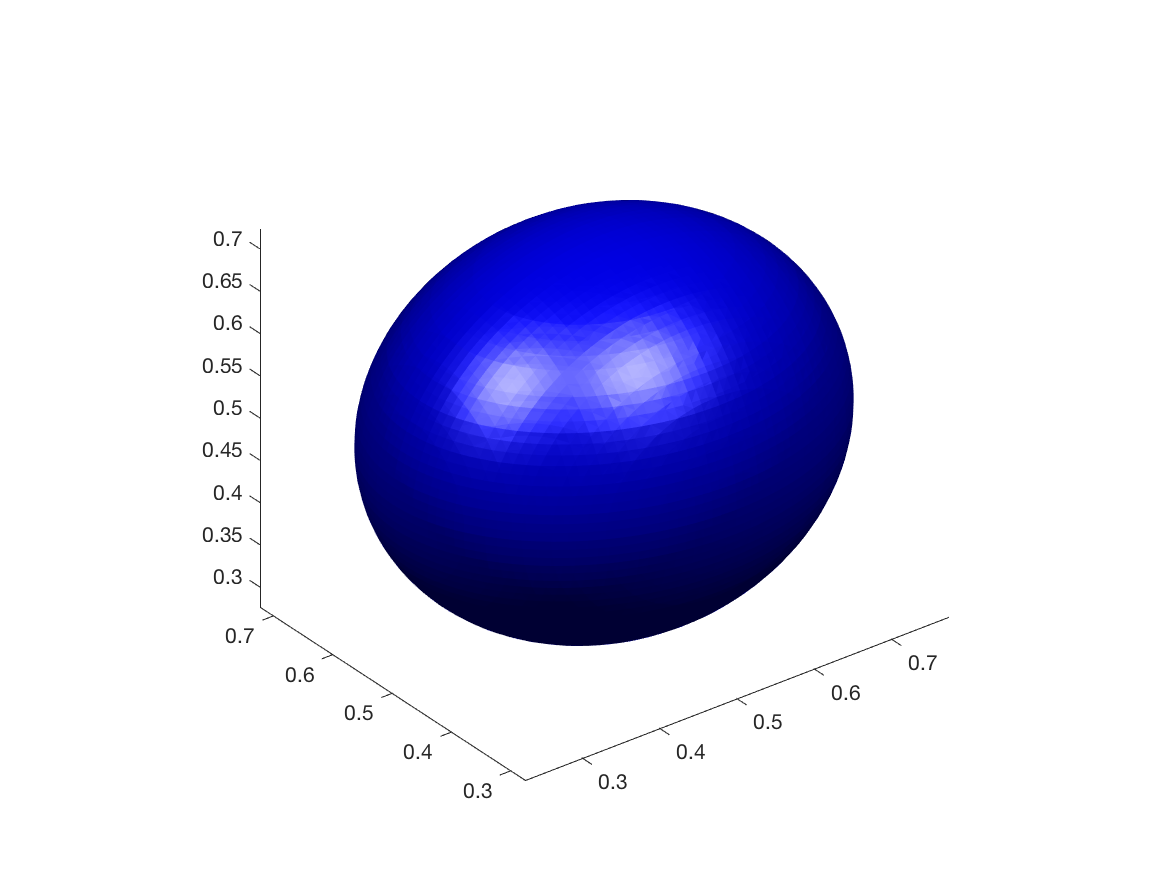}}
\subfloat{\includegraphics[width = 2in]{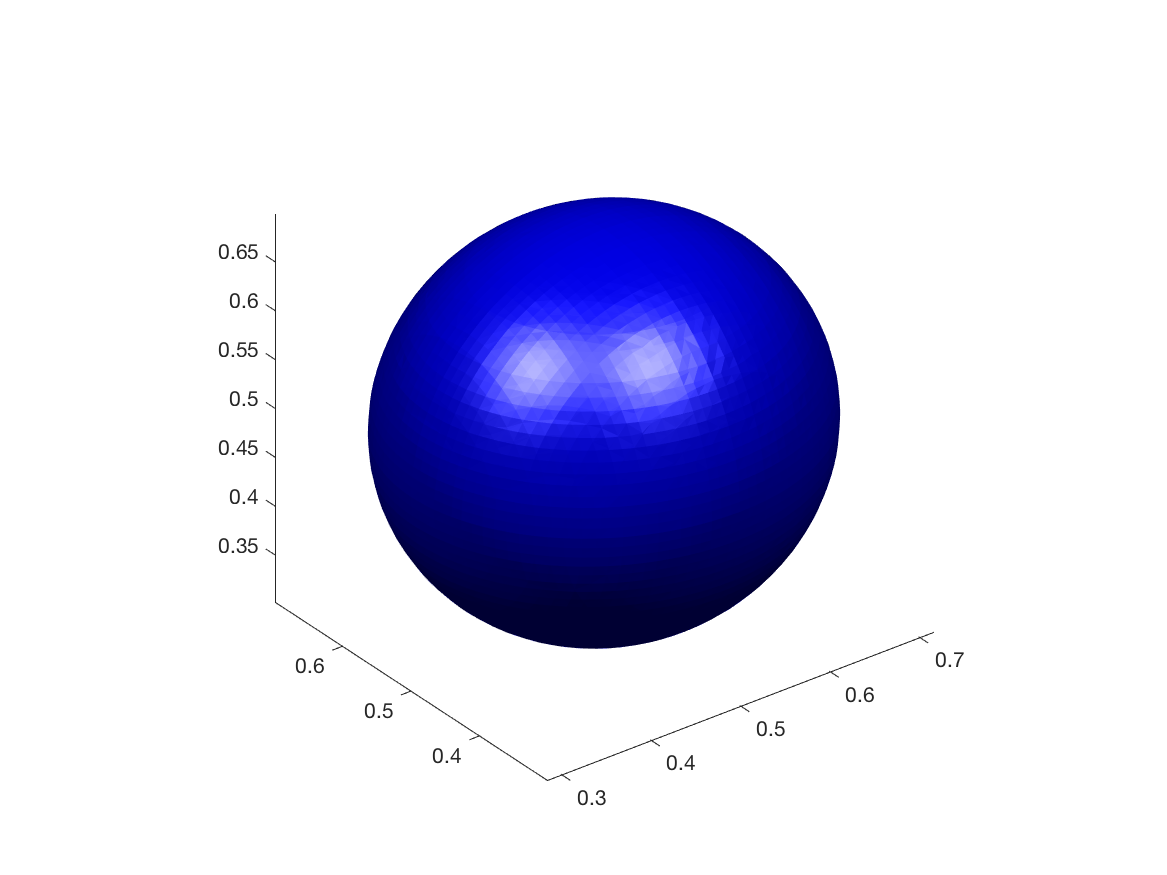}}
\subfloat{\includegraphics[width = 2in]{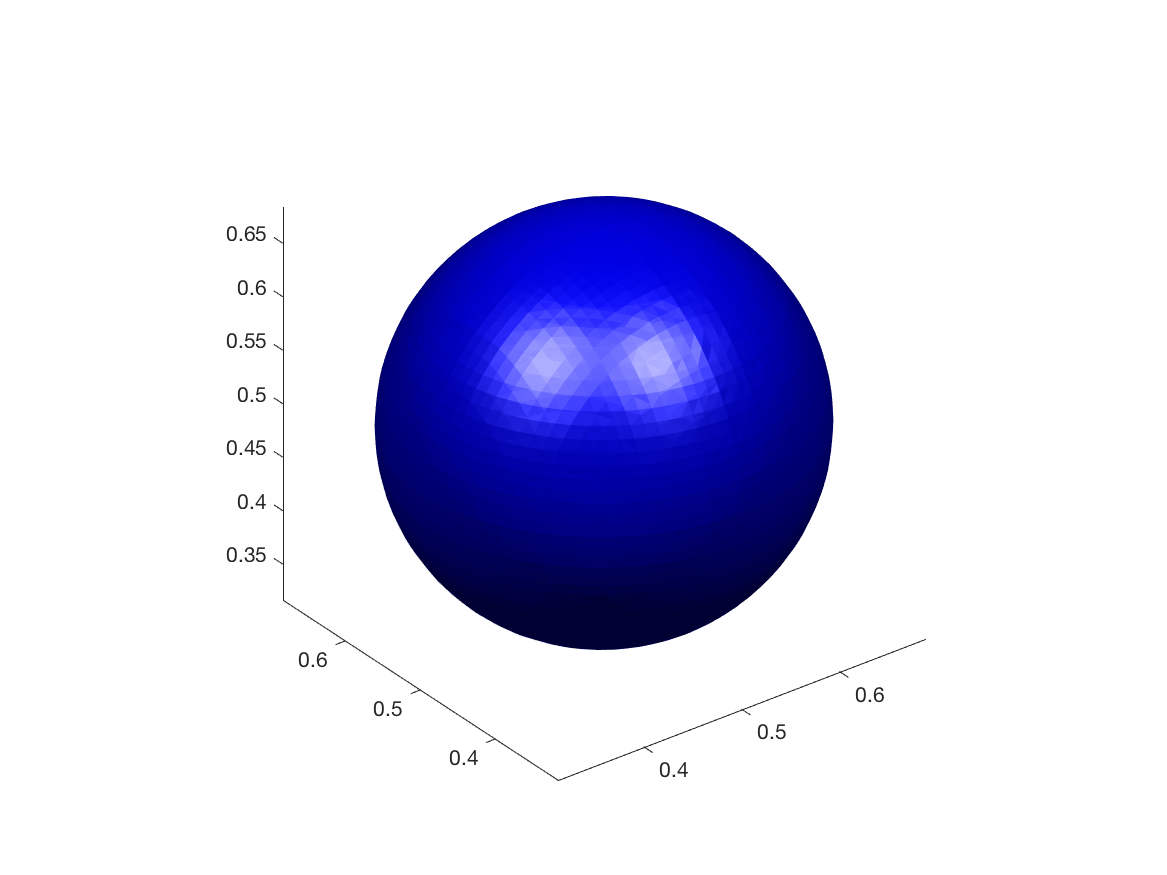}}
\caption{Motion towards a steady state on $(0,1)^3$ with $h=\nicefrac{\sqrt{3}}{32}$.
 The times displayed are $t=0, t=0.05, t=0.1$ .}
\label{fig:3D-minres}
\end{figure}
\par
We similarly compared our 3D results with $h=\nicefrac{\sqrt{3}}{32}$ to that of the direct solver using MATLAB's backslash command whereby considerable computational savings is clearly observed. Specifically, in the test using our solver, 1600 time steps were completed in approximately 8.5 hours whereas, in the test using MATLAB's backslash command, 15 minutes was required to complete only a single time step and the completion of the numerical experiment took a little more than 16 days.

\section{Conclusion}\label{sec:Conclusion}

This paper has been devoted to the development of a robust solver for a second order (in time and space) mixed finite element method for the Cahn-Hilliard equation where in each time step the Jacobian system for the Newton iteration is solved by a preconditioned MINRES algorithm with a block diagonal multigrid preconditioner. The advantages of the solver are demonstrated by several numerical experiments.
\par
We are hopeful that the methodology developed in this paper can be adapted for coupled systems that involve the Cahn-Hilliard equation, such as the Cahn-Hilliard-Navier-Stokes system. In particular, a similar mixed finite element for the Cahn-Hilliard-Navier-Stokes system was developed in \cite{DWWW:2017:SOCHNS} and the investigation of a solver for this particular scheme is an obvious next step and is the topic of an ongoing research project.

\section*{Acknowledgement}
Portions of this research were conducted with high performance
computational resources provided by Louisiana State University
(http://www.hpc.lsu.edu).
 We would also like to thank Shawn Walker for his valuable advice regarding the
 FELICITY/C++ Toolbox for MATLAB.

	\end{document}